\newlength{\temp@wc@width}
\newlength{\temp@wc@height}
\newcommand{\widecheck}[1]{%
\setlength{\temp@wc@width}{\widthof{$#1$}}%
\setlength{\temp@wc@height}{\heightof{$#1$}}%
#1\hspace{-\temp@wc@width}%
\raisebox{\temp@wc@height+2pt}[\heightof{$\widehat{#1}$}]%
{\rotatebox[origin=c]{180}{\vbox to 0pt{\hbox{$\widehat{\hphantom{#1}}$}}}}%
}
\numberwithin{equation}{section}
\theoremstyle{plain}
\newtheorem{theorem}{Theorem}[section]
\newtheorem{lemma}[theorem]{Lemma}
\newtheorem{observation}[theorem]{Observation}
\newtheorem{corollary}[theorem]{Corollary}
\newtheorem{proposition}[theorem]{Proposition}
\newtheorem*{conjecture}{Conjecture}
 {\theoremstyle{definition}}
\def\dim{\operatorname{dim}}
\def\T{ \mathbb T}
\def\R{ \mathbb R}
\def\H{H^\infty}
\def\D{{ \mathbb D}}
\def\C{{ \mathbb C}}
\def\N{{ \mathbb N}}
\def\e{\varepsilon}
\def\ssc{\scriptscriptstyle}
\def\dis{\displaystyle}
\def\union{\cup}
\def\inter{\cap}
\def\Inter{\bigcap }
\def\ov{\overline}
\def\ss{\subseteq}
\def\emp{\emptyset}
\def\buildrel#1_#2^#3{\mathrel{\mathop{\kern 0pt#1}\limits_{#2}^{#3}}}
\def\BP{Blaschke product}
\def\CNBP{Carleson-Newman Blaschke product}
\def\IBP{interpolating Blaschke product}
\def\iff{\Longleftrightarrow}
\begin{document}

\title [The algebra $C(M(\H))_{\rm\ssc sym}$ ]{The covering dimension  of a distinguished subset
of the spectrum $M(\H)$ of $\H$  and the algebra of real-symmetric and continuous  functions
on  $M(\H)$}


\author{Raymond Mortini}
\address{\small D\'{e}partement de Math\'{e}matiques\\
\small LMAM,  UMR 7122,
\small Universit\'{e} Paul Verlaine\\
\small Ile du Saulcy\\
\small F-57045 Metz, France}

\email{mortini@univ-metz.fr}

\subjclass{Primary 46J15; Secondary 46J10; 30H05; 54C40; 54F45; 55M10}
\keywords{Covering  dimension; spectrum for bounded analytic functions;
Bass stable rank; topological stable rank;  real-symmetric functions
}

\begin{abstract} 
We show that the covering dimension, $\dim E$, of  the closure $E$ of the interval $]-1,1[$
in the spectrum of $\H$ equals one. Using Su\'arez's result  that $\dim M(\H)=2$,
we then compute the Bass and topological stable ranks of the  algebra
$C(M(\H))_{\rm\ssc sym}$ of real-symmetric continuous functions  on $M(\H)$.
\end{abstract}

 \maketitle


\section*{Introduction}

In recent years the real counterparts to the classical complex function algebras
$A(\D), A(K),\H(\D)$ have gained  a certain interest due to their  appearance
in control theory. These are, for example,  the algebras
$$\mbox{$A(K)_{\rm\ssc sym}=\{f\in C(K),   f $ holomorphic in  $K^\circ$ and 
$f(z)=\ov{f(\ov z)}$ for all $z\in K\}$},$$
where $K$ is a real-symmetric compact set in $\C$ (that is $K$ satisfies  $z\in K\iff\ov z\in K$),
$$\mbox{$A_\R(\D)=\{f\in A(\D):  f$ real valued on $[-1,1]\}$},$$
and
$$\mbox{$\H_\R=\H_\R(\D)=\{f\in \H(\D):  f$ real valued on $]-1,1[\}$}$$
(see \cite{moru1,mw, rs1,rs2,wi1,wi2}). If $\mathbf D$ is the closed unit disk, then of course 
$A(\mathbf D)_{\rm\ssc sym}= A_\R(\D)$.  
The main feature in the papers referenced above was to give a determination
of the Bass and  topological stable ranks. In addition,  extension problems
to invertible tuples of real-symmetric functions in several complex variables
were studied in \cite{moru2} for the real algebras 
$$C(K)_{\rm\ssc sym}=\{f\in C(K): f( z_1, \dots, z_n)=\ov{f(\ov z_1, \dots, \ov z_n)}\}$$
 of complex valued continuous functions
 on real-symmetric compact sets $K$ in $\C^n$.

In the present work we will determine the topological and Bass stable ranks of
the algebra $C(M(\H))_{\rm\ssc sym}$ of all complex-valued 
continuous functions on the spectrum
  $M(\H)$ of $\H$ that satisfy $f(z)=\ov {f(\ov z)}$ in $\D=\{z\in \C: |z|<1\}$.
  Note that in view of the corona theorem, $\D$ can be viewed of as a dense subset of $M(\H)$.
  We will call $C(M(\H))_{\rm\ssc sym}$ the real-symmetric algebra associated with 
  $C(M(\H))$.
  Let us point out that the trace of $C(M(\H))$ in $\D$ is  a proper subalgebra of the algebra
  $C_b(\D,\C)$ of all  bounded, continuous and complex valued functions on $\D$.

  From a topological view point, the space $M(\H)$  is a very bizarre space;
  it is a non-metrizable, connected, compact Hausdorff space of cardinal  at least  $2^{\mathfrak c}$
  that is neither   locally connected, nor path-connected \cite{ag}. In particular, $M(\H)$ is 
  not contractible. Its covering dimension, though, is small: it is two (\cite{su}).
  
  A quite difficult problem is a concrete characterization of those continuous functions on 
  $\D$ that admit a continuous extension to $M(\H)$. K. Hoffman showed in his
  fundamental work \cite{ho} that $C(M(\H))$ is the smallest  uniformly closed 
  subalgebra of $C_b(\D,\C)$ that contains the (complex)-valued bounded harmonic
  functions. C. Bishop \cite{bish} showed that $f\in C_b(\D,\C)$ has a continuous extension
  to $M(\H)$ if and only if $f$ is uniformly continuous with respect to the hyperbolic metric 
  in $\D$ and for every $\e>0$ there is a Carleson contour $\Gamma$ in $\D$ so that $f$
  is within $\e$ of a constant  on each connected component of $\D\setminus \Gamma$.
  
  Henceforth, we give a thorough discussion of   the algebra $C(M(\H))_{\rm\ssc sym}$.
  It can be looked upon as a non trivial standard model for the classical real function algebras
  $C(X,\tau)$ presented for example in the monograph \cite{kuli} by Kulkarni and Limaye.
  
  \section{The algebra  $C(M(\H))_{\rm\ssc sym}$}
  
   We first look at  several properties of  the underlying space $M(\H) $ that are relevant
  to  the study of the algebra  $C(M(\H))_{\rm\ssc sym}$.
   Let $f\in C(M(\H))$ and define $f^*$ by $f^*(z)=\ov{f(\ov z)}$.  If $f\in \H$, 
   then $f^*\in \H$ and 
    the  operation $\sigma$ given by $\sigma( f)=f^*$ is an algebra involution on $\H$. 
    It is  well known that $\{f\in \H: f=f^*\}$ coincides with the algebra $\H_\R$ defined above.
    We shall now introduce the associated  involution on $C(M(\H))$.

  \begin{lemma}\label{mm*}
  For $m\in M(\H)$, let $m^*$ be defined as $m^*(f)=\ov{m(f^*)}$, $f\in \H$.
   Then $m^*\in M(\H)$.
   Moreover, if  $(\varphi_{z_\alpha})$ is a net of point functionals in $\D$ that 
   converges to $m\in M(\H)$,
then $(\varphi_{\ov z_\alpha})$ is a net of point functionals in $\D$
 that converges to $m^*\in M(\H)$.
  \end{lemma}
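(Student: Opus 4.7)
The plan is to verify directly that $m^*$ satisfies the defining properties of a point in $M(\H)$, and then deduce the net-convergence statement by unwinding the definitions against the weak-$*$ topology.

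The preceding paragraph of the paper already records that $\sigma(f)=f^*$ is a (conjugate-linear) algebra involution on $\H$, so I would first make explicit the few identities I need: $(f+g)^*=f^*+g^*$, $(\lambda f)^*=\bar\lambda f^*$, $(fg)^*=f^*g^*$, and $1^*=1$, all of which follow immediately from $f^*(z)=\ov{f(\bar z)}$. The important structural fact being used here is that $f\in \H$ implies $f^*\in \H$, which makes $m(f^*)$ meaningful.

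Using these identities I would check, in order, that $m^*:\H\to\C$ is linear, multiplicative, and non-zero. Additivity is clear. For homogeneity one has to combine the conjugate-linearity of $\sigma$ with the outer complex conjugate in the definition of $m^*$: $m^*(\lambda f)=\ov{m(\bar\lambda f^*)}=\ov{\bar\lambda\,m(f^*)}=\lambda\,\ov{m(f^*)}=\lambda m^*(f)$. Multiplicativity is $m^*(fg)=\ov{m(f^*g^*)}=\ov{m(f^*)m(g^*)}=m^*(f)m^*(g)$, and $m^*(1)=\ov{m(1)}=1$, so $m^*\in M(\H)$.

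For the net statement, assume $\varphi_{z_\alpha}\to m$ in the Gelfand topology, i.e. $f(z_\alpha)\to m(f)$ for every $f\in\H$. Given $f\in\H$, apply this convergence to $f^*\in\H$ to obtain $f^*(z_\alpha)\to m(f^*)$, and then take complex conjugates. Since $\ov{f^*(z_\alpha)}=\ov{\,\ov{f(\bar z_\alpha)}\,}=f(\bar z_\alpha)=\varphi_{\bar z_\alpha}(f)$, this gives $\varphi_{\bar z_\alpha}(f)\to\ov{m(f^*)}=m^*(f)$ for every $f\in\H$, which is exactly $\varphi_{\bar z_\alpha}\to m^*$ in $M(\H)$.

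There is no real obstacle here beyond careful bookkeeping: one must keep straight which conjugation acts on the argument $z$ (inside $f^*$) and which acts on the scalar value (the outer bar in the definition of $m^*$). The only external input is that $\sigma$ preserves $\H$, which was noted just before the lemma.
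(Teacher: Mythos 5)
Your proof is correct and follows essentially the same route as the paper: direct verification that $m^*$ is additive, homogeneous (using the conjugate-linearity of $f\mapsto f^*$ against the outer conjugation), multiplicative and unital, followed by unwinding $\varphi_{\ov z_\alpha}(f)=f(\ov z_\alpha)=\ov{f^*(z_\alpha)}\to\ov{m(f^*)}=m^*(f)$ for the net statement. You are in fact slightly more explicit than the paper, which only records additivity and homogeneity before concluding $m^*\in M(\H)$.
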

  
  \begin{proof}
It is obvious that $m^*$ is additive.  Moreover,
  $m^*$ is homogeneous because
  $$m^*(\lambda f)= \ov{ m((\lambda f)^*)}
=\ov{m(\ov \lambda f^*)}=\lambda \ov{m(f^*)}=\lambda m^*(f)$$
 whenever $f\in \H$. Thus $m^*\in M(\H)$.
 Now if $\varphi_{z_\alpha}\to m$, then 
$$\varphi_{\ov z_\alpha}(f)=f(\ov z_\alpha)=\ov{f^*(z_\alpha)}=\ov{\varphi_{z_\alpha}(f^*)}\to \ov{m(f^*)}.$$
\end{proof}

    \begin{lemma}\label{invol}
    Let $\tau_0:\D\to\D$ be the involution $a\mapsto \ov a$. Then $\tau_0$ admits
    a unique extension to a topological   involution $\tau$ between $M(\H)$ and itself.
    \end{lemma}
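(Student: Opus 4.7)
The plan is to define $\tau(m) = m^*$ using the involution on $M(\H)$ already introduced in Lemma \ref{mm*}, and then verify successively that this is an extension of $\tau_0$, an involution, continuous, and unique.

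First I would check that $\tau$ extends $\tau_0$. For $a\in\D$ with associated evaluation functional $\varphi_a\in M(\H)$, a direct computation gives
$$\varphi_a^*(f)=\ov{\varphi_a(f^*)}=\ov{f^*(a)}=\ov{\ov{f(\ov a)}}=f(\ov a)=\varphi_{\ov a}(f),$$
so $\tau(\varphi_a)=\varphi_{\tau_0(a)}$, as required. Next, $\tau$ is an involution: since $(f^*)^*=f$ for every $f\in\H$, one has $(m^*)^*(f)=\ov{m^*(f^*)}=\ov{\ov{m((f^*)^*)}}=m(f)$.

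The continuity of $\tau$ follows at once from the definition of the Gelfand topology. If $m_\alpha\to m$ in $M(\H)$, then $m_\alpha(g)\to m(g)$ for every $g\in\H$; applying this with $g=f^*$ and taking complex conjugates yields
$$\tau(m_\alpha)(f)=\ov{m_\alpha(f^*)}\longrightarrow \ov{m(f^*)}=\tau(m)(f)$$
for every $f\in\H$, i.e.\ $\tau(m_\alpha)\to \tau(m)$. Since $\tau$ is its own inverse and continuous, it is a homeomorphism of $M(\H)$ onto itself, and so a topological involution.

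Finally, uniqueness: by the Carleson corona theorem, $\D$ is dense in $M(\H)$, and $M(\H)$ is a Hausdorff space. Any two continuous extensions of $\tau_0$ to $M(\H)$ therefore agree on the dense subset $\D$ and hence coincide. No step is really an obstacle here; the only subtle point is to recognize that the verification of continuity is automatic from the very definition of the weak-$*$ topology on the spectrum, and that uniqueness rests on corona-theorem density of $\D$ in $M(\H)$.
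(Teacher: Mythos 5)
Your proof is correct and follows essentially the same route as the paper: define $\tau(m)=m^*$ via Lemma \ref{mm*}, check $\tau(\varphi_a)=\varphi_{\ov a}$, and obtain continuity directly from the weak-$*$ (Gelfand) topology using nets. You additionally spell out the involution identity $(m^*)^*=m$ and the uniqueness via corona-theorem density of $\D$ and Hausdorffness, points the paper leaves implicit; both verifications are correct.
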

    \begin{proof}
    Let $\varphi_a:f\mapsto f(a)$ be the evaluation functional associated with $a\in\D$.
   For $m\in M(\H)$, consider the functional $m^*$ 
   given above. Define $\tau$ at $m$ by $\tau(m)=m^*$.  Note that $\tau(\varphi_a)=
    \varphi_{\ov a}$.
     Hence $\tau:M(\H)\to M(\H)$
  is an involution   between $M(\H)$ and itself.  It remains to show 
    that $\tau$ is continuous on $M(\H)$. So let $m_\alpha$ be a net in $M(\H)$ converging to $m$.
    Then for $f\in \H$
    $$\tau(m_\alpha) (f)=m_\alpha^*(f)=\ov{m_\alpha(f^*)}\to \ov{m(f^*)}=m^*(f)=\tau(m) (f).$$
    Thus $\tau$ is a topological involution extending   $\tau_0$.
     \end{proof}

     For a topological involution $\tau$ on  a compact Hausdorff space $X$ let
     $$\mbox{$C(X, \tau):=\{f\in C(X,\C): f(\tau(m))=\ov{f(m)}$ for any $m\in X\}$}$$
    be the classical real function algebra as given for example in \cite[p. 27]{kuli}. 
   Using Lemma \ref{invol} above,   we can now represent $C(M(\H))_{\rm\ssc sym}$
   as an algebra of this type:
   
     \begin{corollary}\label{repro}
     Let $\tau$ be the  involution from Lemma \ref{invol}. Then
     $$C(M(\H))_{\rm\ssc sym}=C(M(\H), \tau).$$
     \end{corollary}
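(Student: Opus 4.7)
The plan is to verify the two inclusions directly, relying on the corona theorem (density of $\D$ in $M(\H)$) together with Lemma \ref{mm*} to propagate the symmetry condition from $\D$ to all of $M(\H)$.

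The inclusion $C(M(\H),\tau)\ss C(M(\H))_{\rm\ssc sym}$ is immediate from the construction of $\tau$: if $f\in C(M(\H),\tau)$, then for each $a\in\D$ the functional $\varphi_a$ satisfies $\tau(\varphi_a)=\varphi_{\ov a}$, so
\[
f(\ov a)=f(\varphi_{\ov a})=f(\tau(\varphi_a))=\ov{f(\varphi_a)}=\ov{f(a)},
\]
exactly the condition defining $C(M(\H))_{\rm\ssc sym}$.

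For the converse, let $f\in C(M(\H))_{\rm\ssc sym}$ and fix $m\in M(\H)$. By the corona theorem, $\D$ is dense in $M(\H)$, so there is a net $(\varphi_{z_\alpha})$ of point evaluations with $\varphi_{z_\alpha}\to m$. By Lemma \ref{mm*}, the conjugate net satisfies $\varphi_{\ov z_\alpha}\to m^*=\tau(m)$. Continuity of $f$ gives $f(\varphi_{z_\alpha})\to f(m)$ and $f(\varphi_{\ov z_\alpha})\to f(\tau(m))$. On the other hand, the symmetry hypothesis on $\D$ yields
\[
f(\varphi_{\ov z_\alpha})=f(\ov z_\alpha)=\ov{f(z_\alpha)}=\ov{f(\varphi_{z_\alpha})}\longrightarrow \ov{f(m)}.
\]
Uniqueness of limits forces $f(\tau(m))=\ov{f(m)}$, and since $m$ was arbitrary, $f\in C(M(\H),\tau)$.

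There is essentially no obstacle in this argument: the only non-trivial input is the compatibility between $\tau$ and complex conjugation on nets, which is exactly what Lemma \ref{mm*} supplies. The density of $\D$ in $M(\H)$ and the continuity of $f$ then do the rest via a standard net-limit argument.
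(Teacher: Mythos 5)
Your proof is correct and follows essentially the same route as the paper: the forward inclusion via a net of point evaluations $\varphi_{z_\alpha}\to m$ furnished by the corona theorem, with Lemma \ref{mm*} (equivalently the continuity of $\tau$ from Lemma \ref{invol}) giving $\varphi_{\ov z_\alpha}\to\tau(m)$ and continuity of $f$ closing the limit argument, and the reverse inclusion from $\tau|_\D=\tau_0$. The only cosmetic difference is that the paper separates off the trivial case where $m$ is a point evaluation in $\D$, which your uniform net argument covers anyway.
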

     \begin{proof}
     Recall that $C(M(\H))_{\rm\ssc sym}$ was defined to be the set of all functions $f$
     in $C(M(\H))$ such that $f(\ov z)=\ov{f(z)}$ for all $z\in \D$. Let $f\in C(M(\H))_{\rm\ssc sym}$
     and 
     $m\in M(\H)$.  The continuity of $\tau$ implies that $g\circ\tau\in C(M(\H))$ whenever 
     $g\in C(M(\H))$.
      If $m$ is not point evaluation at some point in $\D$  then,
     by the corona theorem, we choose  a net $z_\alpha$ in $\D$ such that
      $\varphi_{z_\alpha}\to m$. Then, by Lemma \ref{invol}
      $$f(\tau (m))= \lim f(\tau(\varphi_{z_\alpha}))=\lim f(\ov z_\alpha)=
      \lim \ov{f(z_\alpha)}=\ov{f(m)}.$$
      So $C(M(\H))_{\rm\ssc sym}\ss C(M(\H), \tau)$. The other inclusion is trivial
      noticing that $\tau$ restricted to $\D$ is $\tau_0$.
     \end{proof}
     
     \begin{observation}\label{obs}
     $f\in C_b(\D,\C)$ has a continuous extension $F$ to
  $M(\H)$ if and only if $f^*$ has. 
     \end{observation}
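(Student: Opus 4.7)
The plan is to exploit the continuous involution $\tau$ on $M(\H)$ from Lemma \ref{invol} to transport continuous extensions. Since $(f^*)^* = f$, the statement is symmetric in $f$ and $f^*$, so it suffices to prove one direction: assuming $f \in C_b(\D,\C)$ has a continuous extension $F \in C(M(\H))$, produce a continuous extension of $f^*$.

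The natural candidate is to define $F^* \colon M(\H) \to \C$ by
\[
   F^*(m) := \overline{F(\tau(m))}, \quad m \in M(\H).
\]
Continuity of $F^*$ is immediate: $\tau$ is continuous on $M(\H)$ by Lemma \ref{invol}, $F$ is continuous by hypothesis, and complex conjugation is continuous on $\C$, so $F^*$ is a composition of continuous maps.

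It then remains to verify that $F^*$ restricted to $\D$ (viewed through the evaluation embedding $z \mapsto \varphi_z$) agrees with $f^*$. For $z \in \D$, Lemma \ref{invol} gives $\tau(\varphi_z) = \varphi_{\overline{z}}$, and hence
\[
   F^*(\varphi_z) = \overline{F(\varphi_{\overline{z}})} = \overline{f(\overline z)} = f^*(z).
\]
This shows $F^*$ is the desired continuous extension of $f^*$ to $M(\H)$. The converse direction follows by applying the same argument to $f^*$ in place of $f$ and using $f^{**} = f$.

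There is no substantial obstacle here; the argument is a direct bookkeeping application of Lemma \ref{invol}. The only point that requires any care is recording that $\tau$ restricted to $\D$ coincides with $\tau_0 \colon z \mapsto \overline z$, so the identification of $F^*|_{\D}$ with $f^*$ really does come out correctly, and this is built into the proof of Lemma \ref{invol}.
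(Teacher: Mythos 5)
Your argument is correct and is essentially the paper's own proof: the paper also observes that $f^*=(\ov{F\circ\tau})|_{\D}$ and invokes the continuity of $\tau$ from Lemma \ref{invol}, with the symmetry $(f^*)^*=f$ handling the converse. Your write-up merely spells out the restriction-to-$\D$ bookkeeping that the paper leaves implicit.
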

  \begin{proof} 
    This follows 
 from the representation $f^*=(\ov{F\circ \tau})|_{\D}$ and  the fact that
     $\tau$ is continuous on $M(\H)$ (Lemma \ref{invol}).
     Another way to see this is to use 
  Hoffman's theory that states that $C(M(\H))$ is the  uniformly closed subalgebra
     of $C_b(\D,\C)$ generated by bounded holomorphic functions and their complex conjugates.
      \end{proof}
       
       In conformity with our previous notation, we keep on writing $f^*$ for the function
       $ \ov{f\circ \tau}$, whenever $f\in C(M(\H))$; that is 
       $$f^*(m)=\ov{ f(m^*)},$$
       where $m\in M(\H)$.
       
   In view of Corollary \ref{repro} and \cite[Theorem 1.3.20]{kuli}
      we have the following result on the structure of the
  maximal ideals of $C(M(\H))_{\ssc\rm sym}$ and their associated  multiplicative linear functionals
  (see also  \cite{mo3}  for the case of the algebra $A_\R(\D)$).
  
   \begin{theorem}
  Let $F_\tau= \{m\in M(\H): \tau(m)=m\}$ be the set of fixed points of $\tau$.
  Then  the following assertions hold:
  
   i) An ideal $I$ in $C(M(\H))_{\rm\ssc sym}$ is maximal  if and only if 
   $$I=I_m:=\{f\in C(M(\H))_{\rm\ssc sym}: f(m)=0\}$$
    for some $m\in M(\H)$. Moreover, $I_m=I_{m^*}$ for any $m$.
   
   ii)  $I_m$ has co-dimension 1 (in the real vector space $C(M(\H))_{\rm\ssc sym}$)
   if and only if  $m\in F_\tau$;
   
   iii) $I_m$ has co-dimension 2 if and only if $m\in M(\H)\setminus F_\tau$. 
      
   iv) The only multiplicative $\R$-linear functionals $\phi: C(M(\H))_{\rm\ssc sym}\to\R$
   are given by $\phi(f)=f(m)$, where $m\in F_\tau$. Their kernels are those maximal ideals
   $I_m$ that have co-dimension 1 in  the real vector space $C(M(\H))_{\rm\ssc sym}$.
   
   v) The remaining  multiplicative $\R$-linear functionals  have target space $\C$, (regarded as an
   algebra over $\R$),  and are given by $\phi(f)=f(m)$ or $\phi(f)=\ov{f(m)}$, where
   $m\in M(\H)\setminus F_\tau$. 
   Their kernels are the maximal ideals $I_m$ that have co-dimension 2 in  the real vector space $C(M(\H))_{\rm\ssc sym}$.
  
  vi) $C(M(\H))$ is the complexification of $C(M(\H))_{\rm\ssc sym}$.
  Each $q\in C(M(\H))$ can be uniquely written  as $q=f+ig$, 
  where $f,g\in C(M(\H))_{\rm\ssc sym}.$
 Here $f=(q+q^*)/2$ and $g=(q-q^*)/(2i)$.
 
 vii) $\sigma(f)=f^*$ is a topological involution on $C(M(\H))$.
   \end{theorem}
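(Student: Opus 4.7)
The strategy is to exploit Corollary \ref{repro}, which identifies $C(M(\H))_{\rm\ssc sym}$ with the classical real function algebra $C(M(\H),\tau)$. Parts (i)--(v) then follow almost verbatim from the Kulkarni--Limaye structure theorem \cite[Theorem 1.3.20]{kuli} applied to the compact Hausdorff space $M(\H)$ equipped with the topological involution $\tau$ of Lemma \ref{invol}; parts (vi) and (vii) reduce to direct computations with the operation $\sigma: f \mapsto f^*$.

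I would proceed as follows. For (i), note that for any $m \in M(\H)$ and $f \in C(M(\H))_{\rm\ssc sym}$, the relation $f(m^*)=\ov{f(m)}$ yields $I_m = I_{m^*}$; maximality and the converse (every maximal ideal has this form) are exactly the content of \cite[Theorem 1.3.20]{kuli}. For (ii), if $m \in F_\tau$, then $f(m) = \ov{f(m)} \in \R$, so evaluation is an $\R$-linear surjection $C(M(\H))_{\rm\ssc sym}\to \R$ with kernel $I_m$, giving real codimension $1$. For (iii), the key step is surjectivity of $f\mapsto f(m)$ onto $\C$ when $m \notin F_\tau$: since $m \neq m^*$, normality and Urysohn's lemma produce $h \in C(M(\H))$ with $h(m)=1$ and $h(m^*)=0$; then for $\alpha \in \C$ the function
\[
f_\alpha := \alpha\, h + \ov\alpha\, h^*
\]
lies in $C(M(\H))_{\rm\ssc sym}$ (a direct check using $(h^*)^* = h$) and satisfies $f_\alpha(m) = \alpha + \ov\alpha \cdot \ov{h(m^*)} = \alpha$, yielding real codimension $2$. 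Parts (iv) and (v) then follow by inspecting the quotients: in the fixed-point case the quotient is $\R$ and the only $\R$-algebra map is the identity; in the non-fixed case the quotient is $\R$-isomorphic to $\C$, whose only $\R$-algebra endomorphisms are the identity and complex conjugation, giving the two functionals $f\mapsto f(m)$ and $f\mapsto\ov{f(m)}$.

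For (vi), I would set $f := (q+q^*)/2$ and $g := (q-q^*)/(2i)$; symmetry $f^*=f$ and $g^*=g$ is immediate from $(q^*)^* = q$ (which belongs to (vii)), and uniqueness follows by applying $\sigma$ to $q = f + ig$ to obtain $q^* = f - ig$ and solving. For (vii), I would verify that $\sigma$ is well-defined on $C(M(\H))$ (Observation \ref{obs}), is conjugate-linear and multiplicative, satisfies $\sigma^2=\mathrm{id}$, and is a sup-norm isometry (hence continuous), all of which follow from $\tau$ being a homeomorphism and pointwise complex conjugation being an isometry. The main obstacle I anticipate is the Urysohn-plus-symmetrization step in (iii): one must produce an honestly $\tau$-symmetric function with prescribed complex value at $m$, which hinges on both the normality of $M(\H)$ and on the separation $m\neq m^*$ in $M(\H)$ whenever $m\notin F_\tau$.
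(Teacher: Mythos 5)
Your proposal is correct and follows essentially the same route as the paper: identify $C(M(\H))_{\rm\ssc sym}$ with $C(M(\H),\tau)$ via Corollary \ref{repro}, verify the codimension dichotomy at fixed versus non-fixed points of $\tau$, and invoke the Kulkarni--Limaye structure theorem \cite[Theorem 1.3.20]{kuli} for the remaining assertions. The only (minor) difference is that where the paper cites \cite[Lemma 1.3.7]{kuli} to get a symmetric function with $f(m)=i$, $f(\tau(m))=-i$, you reprove it directly via Urysohn plus the symmetrization $\alpha h+\ov{\alpha}h^*$, which is a correct and self-contained substitute.
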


\begin{proof}
For the proof, we just note that if $\tau(m)=m$, then the evaluation functional
$\phi_m$  on $C(M(\H))_{\rm\ssc sym}=C(M(\H),\tau)$ satisfies
$$\phi_m(f)=f(m)=f(\tau(m))=\ov{f(m)}.$$
Hence $\phi_m$ is real valued and so the kernel has co-dimension 1.

On the other hand, if $\tau(m)\not=m$, then there exists (by \cite[Lemma 1.3.7]{kuli})
a function $f\in  C(M(\H), \tau)$ with $f(m)=i$ and $f(\tau(m))=-i$.
Thus the evaluation functional $\phi_m$ is a surjection onto the real algebra $\C$;
hence its kernel has codimension 2. 

The results now follow from   \cite[Theorem 1.3.20]{kuli}).
\end{proof}

That the maximal ideal spaces of $C(M(\H))_{\rm\ssc sym}$ and $C(M(\H))$
can be identified also  follows from  the fact that if 
$(f_1,\dots, f_N)\in C(M(\H))_{\rm\ssc sym}^N$,
then    a solution to the B\'ezout equation $\sum_{j=1}^N  q_j f_j=1$  in $C(M(\H))$
yields the   solution $\sum_{j=1}^N \frac{q_j+q_j^*}{2} f_j=1$ of the associated
B\'ezout equation  in $C(M(\H))_{\rm\ssc sym}$.

In the same way,     
we may identify the  maximal space of  the  real subalgebra  $\H_\R$  of 
$C(M(\H))_{\rm\ssc sym}$ with $M(\H)$.  Note, however, that if $m$ is a character of $\H_\R$, 
 then the  maximal ideals ${\rm Ker}\, m$ and ${\rm Ker}\, m^*$
 coincide even   in the case where $m\not=m^*$.

   \medskip
In the next section we will determine the set $F_\tau$ of fixed points of $\tau$. 
    
     \section{ The closure of the open unit interval in $M(\H)$}
     
     Let $E$ be the closure of $]-1,1[$ in $M(\H)$, 
     $M^+$ the closure of $\D^+:=\{z\in \D: {\rm Im}\; z>0\}$ in $M(\H)$ and
     $M^-$ the closure of $\D^-:=\{z\in \D: {\rm Im}\; z<0\}$ in $M(\H)$. 
       Finally, $\T^+=\{e^{i\theta}: 0<\theta<\pi\}$ and  $\T^-=\{e^{i\theta}: -\pi<\theta<0\}$.
     
       The goal in this section is to prove that $M^+\inter M^-=E$ and to show that $E=F_\tau$.
       To this end, we need a couple of lemmas.
     
     For $f\in C(M(\H))$, we denote by $Z(f)=\{m\in M(\H): f(m)=0\}$ the zero set of $f$.
     The set of points in $M(\H)$ with non-trivial Gleason parts will be denoted
     as usual, by $G$ (see \cite{ga,ho}). The pseudohyperbolic distance on $\D$
     is given by $\rho(z,w)\dis =\left|\frac{z-w}{1-z \ov w}\right|$. 
     Its extension to $M(\H)$ is defined as
     $$\rho(x,m)=\sup\{|f(x)|: f\in \H, ||f||_\infty \leq 1, f(m)=0\},$$
     $x,m\in M(\H)$.

    \begin{observation}\label{obs2}
     Let $f\in C(M(\H))$. Then 
  $f^*(x)=\ov{f(x)}$ whenever $x\in E$. 
  \end{observation}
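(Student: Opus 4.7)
The plan is to show that every point of $E$ is fixed by the involution $\tau$ from Lemma \ref{invol}, and then unwind the definition $f^*(x) = \overline{f(\tau(x))}$.

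First I would observe that on the open interval $]-1,1[\subseteq \D$, the involution $\tau_0: a \mapsto \overline{a}$ acts trivially, since real points satisfy $\overline{a}=a$. In terms of point evaluations, this means $\tau(\varphi_a) = \varphi_{\overline{a}} = \varphi_a$ for every $a \in \;]-1,1[$.

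Next, I would use the continuity of $\tau$ on $M(\H)$ established in Lemma \ref{invol}. Take any $x \in E$. By definition, $x$ is the limit of a net $(a_\alpha)$ in $]-1,1[$ (identified with their point functionals). Then
\[
\tau(x) = \tau(\lim a_\alpha) = \lim \tau(a_\alpha) = \lim a_\alpha = x,
\]
so $x \in F_\tau$, that is, $x^* = x$. Hence $E \subseteq F_\tau$.

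Finally, for $f \in C(M(\H))$ and $x \in E$, the definition of $f^*$ (as recorded right after Observation \ref{obs}) gives
\[
f^*(x) = \overline{f(x^*)} = \overline{f(x)},
\]
which is the claimed identity. The only potentially delicate point is the continuity of $\tau$, but this is precisely the content of Lemma \ref{invol}, so no further obstacle arises.
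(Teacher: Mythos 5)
Your argument is correct and is essentially the paper's own proof: both rest on taking a net in $]-1,1[$ converging to $x\in E$ and passing to the limit by continuity. The only difference is cosmetic --- you first record that $\tau(x)=x$ (the easy half of Corollary \ref{fixpoints}(2), which the paper later proves by exactly this net argument) and then apply the definition $f^*(x)=\ov{f(x^*)}$, whereas the paper pushes the net directly through the values of $f^*$ and $f$.
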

  \begin{proof}
  Let $(r_\alpha)$ be a net in $]-1,1[$
  that converges to $x$. Then
  $$f^*(x)=\lim f^*(r_\alpha)=\lim \ov{f(\ov r_\alpha)}=\lim \ov{f(r_\alpha)}=\ov{f(x)}.$$
\end{proof}

Our subsequent results will be based on the following  assertion.
Recall that for  a real or complex function algebra $A$ with character space $M(A)$
 a  compact set $C\ss M(A)$ is said to be $A$-convex if
$C$ coincides with its $A$-convex hull
$$\check C=\{m\in M(A): |\hat f(m)|\leq \max_C |\hat f|, ~~ \forall f\in A\},$$
where $\hat f$ denotes the Gelfand transform of $f$.

\begin{theorem}\label{convexity}
Let $S$ be  a closed subset of the closure $E$  of $]-1,1[$ in $M(\H)$.
Then $S$  is $\H$-convex as well as $\H_\R$-convex.
\end{theorem}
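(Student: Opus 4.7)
The strategy is to show that the restriction algebras $\H|_S$ and $\H_\R|_S$ are uniformly dense in $C(S,\C)$ and $C(S,\R)$, respectively. Once this is established, each convex hull coincides with $S$: if $m \in \check S$, the inequality $|\hat f(m)|\le \max_S|\hat f|$ says $m$ descends to a norm-$\le 1$ character on the restriction algebra, which by uniform density extends to a character of $C(S,\C)$ (respectively $C(S,\R)$); and every character of $C(S)$ is evaluation at a point of $S$.

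To produce density I would apply Stone--Weierstrass. For $\H|_E \ss C(E,\C)$, the algebra plainly contains the constants and separates the points of $E$ because $\H$ separates points of $M(\H)$. The one nontrivial hypothesis, closure under complex conjugation, is exactly where the hypothesis $S\ss E$ enters: for $f \in \H$ and $x \in E$, Observation \ref{obs2} gives $\overline{f(x)} = f^*(x)$, and $f^* \in \H$, so $\overline f|_E = f^*|_E \in \H|_E$. Thus $\H|_E$ is dense in $C(E,\C)$, and restriction yields density of $\H|_S$ in $C(S,\C)$.

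For the real case I would apply the real Stone--Weierstrass to $\H_\R|_E \ss C(E,\R)$. The values are indeed real: if $f\in\H_\R$ then $f = f^*$, and Observation \ref{obs2} gives $f(x) = \overline{f(x)}$ on $E$. Separation of points of $E$ by $\H_\R$ uses the splitting $\H = \H_\R \oplus i\H_\R$: any $f \in \H$ can be written $f = g + ih$ with $g = (f+f^*)/2$ and $h = (f-f^*)/(2i)$ both in $\H_\R$, so whenever $f\in\H$ distinguishes two points of $E$, so does $g$ or $h$. Hence $\H_\R|_E$, and a fortiori $\H_\R|_S$, is dense in $C(S,\R)$.

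Putting the two steps together yields both convexity statements, modulo the routine fact that every $\R$-algebra homomorphism $C(S,\R)\to\C$ is an $\R$-valued point evaluation: such a homomorphism extends $\C$-linearly to a character of $C(S,\C)$, which must be evaluation at some $s\in S$, forcing $\R$-valuedness on $C(S,\R)$. The principal obstacle I anticipate is precisely the conjugation-closure step in Stone--Weierstrass; it is the identity $\overline{f}|_E = f^*|_E$ from Observation \ref{obs2} that singles out $E$ as the set on which the argument succeeds, and the rest of the proof is essentially Gelfand-theoretic bookkeeping.
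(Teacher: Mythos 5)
Your argument is correct, but it takes a genuinely different route from the paper. The paper argues pointwise: given $x\notin S$, it invokes Su\'arez's result that $\H$ is separating to obtain a single $f\in\H$, $\|f\|_\infty\le 1$, with $f(x)=0$ and $f\neq 0$ on $S$; since $f^*=\ov f$ on $E$ (Observation \ref{obs2}), the function $g=ff^*\in\H_\R$ equals $|f|^2$ on $E$, so $\sigma:=\min_S g>0$ while $g(x)=0$, and then $h=1-g\in\H_\R$ satisfies $|h(x)|=1>1-\sigma\ge\max_S|h|$, excluding $x$ from the $\H_\R$-hull and a fortiori from the $\H$-hull. You instead prove the stronger statement that $\H|_S$ and $\H_\R|_S$ are uniformly dense in $C(S,\C)$ and $C(S,\R)$ via Stone--Weierstrass, with the same identity $f^*|_E=\ov f|_E$ supplying conjugation-closure, and then recover convexity by Gelfand duality (descend $m$ to a norm-one character of the restriction algebra, extend to $C(S)$, identify it with a point evaluation). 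Both proofs pivot on Observation \ref{obs2}; yours needs only the automatic point-separation of $M(\H)$ by Gelfand transforms, Tietze, and the character theory of $C(S)$, so it avoids the citation to Su\'arez, while the paper's proof is shorter and more elementary, producing one explicit witness function per excluded point. Your density conclusion is strictly stronger and would, for instance, furnish the idempotents used in the proof of Theorem \ref{cover} without appealing to Shilov's idempotent theorem. One step worth writing out explicitly in the real case: after identifying the extended character of $C(S,\R)$ with evaluation at some $s\in S$, conclude $m=s$ in $M(\H)$ by noting that $m$ and evaluation at $s$ agree on $\H_\R$ and hence, by $\C$-linearity of $m$ and the decomposition $\H=\H_\R+i\H_\R$, on all of $\H$; this is the bookkeeping you allude to, and it closes without difficulty.
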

\begin{proof}

Let $x\notin S$. Since $\H$ is separating (see \cite[p. 242]{su}), there exists $f\in \H$
such that $f(x)=0$ and $f\not=0$ on $S$. We may assume that $||f||_\infty\leq 1$.
Let $g=ff^*$. Then $g\in \H_\R$ and so $g$ 
is real valued on $E$. By definition of $g$, we actually have that  $1\geq g\geq 0$ on $E$.
Since  $f\not=0$ on $S$,  we obtain from  $f^*=\ov f$ on $E$, that $\sigma:=\min_S g >0$.
Now let $h=1-g$. Then $h\in \H_\R\ss\H$, $h(x)=1$ and  so
$$\max_S |h| =\max_S h\leq 1-\sigma < |h(x)|.$$
Thus $x$ does not belong to the $\H_\R$-convex closure of $S$. Therefore $S$ is 
$\H_\R$-convex as well as $\H$-convex.
\end{proof}

\begin{lemma}\label{halfdisk}
Let $b$ be an \IBP\ all of whose zeros $z_n$ in $\D$ satisfy ${\rm Im}\, z_n<0$.
Suppose that $Z(b)\inter E=\emp$.  Then $Z(b)\inter M^+=\emp$ and $Z(b^*)\inter M^-=\emp$.
\end{lemma}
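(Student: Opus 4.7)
The plan is to argue by contradiction. First I reduce to a single assertion: the conclusions $Z(b)\cap M^+=\emptyset$ and $Z(b^*)\cap M^-=\emptyset$ are equivalent under the involution $\tau$ of Lemma~\ref{invol}, since $\tau(Z(b))=Z(b^*)$ (from $b^*(m)=\overline{b(m^*)}$) and $\tau(M^+)=M^-$. So it suffices to prove $Z(b)\cap M^+=\emptyset$. Assuming there is some $m\in Z(b)\cap M^+$, I observe that because $b$ is interpolating, $Z(b)$ coincides with the $M(\H)$-closure of the zero sequence $\{z_n\}\subset\D^-$, so $Z(b)\subset M^-$ and in particular $m\in M^+\cap M^-$.

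Next I build a real-symmetric separator of $m$ from $E$, in the spirit of the proof of Theorem~\ref{convexity}. From $Z(b)\cap E=\emptyset$, the compactness of $E$, and the continuity of $b$ on $M(\H)$, I choose $\delta>0$ with $|b|\geq\delta$ on $E$. By Observation~\ref{obs2}, $b^*=\overline b$ on $E$, so $g:=bb^*\in\H_\R$ satisfies $g=|b|^2\geq\delta^2$ on $E$. Setting $h:=1-g\in\H_\R$, one has $h(m)=1$ (since $b(m)=0$) while $\max_E|h|\leq 1-\delta^2<1$, exactly the separation of $m$ from $E$ by a real-symmetric $H^\infty$-function that underlies Theorem~\ref{convexity}.

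The contradiction is extracted from $m\in M^+$: writing $m=\lim\varphi_{w_\alpha}$ with $w_\alpha\in\D^+$, the relation $b(w_\alpha)\to b(m)=0$ combined with the classical fact that $|b|$ is bounded below on $\D$ away from pseudohyperbolic neighborhoods of the interpolating zeros forces $w_\alpha$ into ever smaller $\rho$-disks centered at zeros $z_{n(\alpha)}$. The half-space estimate
\[
\rho(w,z)\geq\tfrac12\bigl(\operatorname{Im}w+|\operatorname{Im}z|\bigr)\qquad(w\in\D^+,\ z\in\D^-),
\]
derived from $|w-z|\geq\operatorname{Im}w+|\operatorname{Im}z|$ and $|1-w\bar z|\leq 2$, then yields $\operatorname{Im}(w_\alpha)\to 0$ and $\operatorname{Im}(z_{n(\alpha)})\to 0$. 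Since $m\notin\D$, also $|w_\alpha|\to 1$, and a subnet satisfies $w_\alpha\to\pm 1$ in $\C$. I would conclude by producing a real net $r_\alpha\in\,]-1,1[$ whose point evaluations converge in $M(\H)$ to $m$, placing $m\in E$ and contradicting $|h(m)|=1>\max_E|h|$. The main obstacle is producing this real net: the naive choice $r_\alpha=\operatorname{Re}(w_\alpha)$ gives $\rho(r_\alpha,w_\alpha)\to 0$ (and hence Schwarz--Pick transfer of $\H$-values) only for a tangential approach to $\pm 1$, whereas a priori the approach could be non-tangential. Ruling out the non-tangential case is the technical core of the proof and uses the hypothesis $Z(b)\cap E=\emptyset$ together with a finer $\rho$-analysis along the fibers of the Gelfand transform of the coordinate function over $\{-1,1\}$, combined with the separator $h$ applied to auxiliary subnets.
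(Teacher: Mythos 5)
Your reduction via $\tau$, the identification of $Z(b)$ with the $M(\H)$-closure of the zero sequence, and the use of Hoffman-type $\rho$-proximity of $\D^+$ to the zeros are all in line with the paper's argument. But the proposal has a genuine gap, and you have located it yourself: the ``technical core'' --- passing from the net $w_\alpha\in\D^+$ (or from the nearby zeros $z_{n(\alpha)}\in\D^-$) to a net of \emph{real} points with the same limit --- is not proved; you only gesture at ``a finer $\rho$-analysis along the fibers'' and ``the separator $h$ applied to auxiliary subnets.'' Your half-space estimate is too weak for this purpose: it yields only ${\rm Im}\,w_\alpha\to0$ and ${\rm Im}\,z_{n(\alpha)}\to0$ in the Euclidean sense, which, as you correctly fear, does not give $\rho(w_\alpha,{\rm Re}\,w_\alpha)\to0$ by itself. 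The missing ingredient is the pseudohyperbolic reflection inequality on which the paper's proof turns: if ${\rm Im}\,a<0$ and ${\rm Im}\,\xi\ge0$, then
$$\rho(a,{\rm Re}\,a)\;\le\;\rho(a,\ov a)\;\le\;\rho(a,\xi)+\rho(\xi,\ov a)\;\le\;2\rho(a,\xi),$$
so that $\rho$-proximity of two points lying in opposite half-disks automatically forces pseudohyperbolic proximity to the real axis; there is no tangential/non-tangential case distinction to rule out, and the hypothesis $Z(b)\inter E=\emp$ is not needed for this step.

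The paper also sidesteps your strategic difficulty in another way: it does not try to show that the original point $x\in Z(b)\inter M^+$ lies in $E$. Instead it applies the inequality with $a=z_{n(j)}$ and $\xi=u_j\in\D^+$, takes a cluster point $m$ of the zeros $z_{n(j)}$ (which lies in $Z(b)$), and uses the lower semicontinuity of $\rho$ on $M(\H)$ to identify $m$ with a cluster point $m'$ of the real parts ${\rm Re}\,z_{n(j)}$, which lies in $E$; thus $Z(b)\inter E\not=\emp$, the desired contradiction. Once some point of $Z(b)$ is placed in $E$, the contradiction is immediate, so your separating function $h=1-bb^*$ (the $\H_\R$-convexity device of Theorem \ref{convexity}) is superfluous here --- it cannot substitute for the missing reflection estimate, and with that estimate in hand it is not needed. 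As written, the proposal therefore does not constitute a proof; supplying the displayed inequality and the semicontinuity argument would complete it along essentially the paper's lines.
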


\begin{proof}
Let $x\in M(\H)$ satisfy $b(x)=0$. We may assume that $x\notin\D$. Then $x\in G$ and
$x$  belongs to the closure of the $\{z_n:n\in \N\}$ (see  \cite[p. 379]{ga}). 
Assuming that $x\in M^+={\rm cl}\, (\D^+) $, we get from Hoffman's result  \cite[p. 103]{ho} that 
$\rho(Z(b)\inter \D,  \D^+)=0$.   For $j\in \N$, let   $u_j\in \D^+$ and $n(j)$ be chosen so that $\rho(z_{n(j)}, u_j)\leq 1/j$. Then every cluster point $m$ of $\{z_{n(j)}:j\in\N\}$  belongs
to $Z(b)$. 

Note that ${\rm Im}\, z_{n(j)}<0$ and ${\rm Im}\, u_j> 0$. 
By passing to subnets, we may assume that $z_{n(j(\alpha))}\to m$.
Now, if ${\rm Im}\, a<0$ and ${\rm Im}\, \xi\geq 0$, then 
$$ \rho(a, {\rm Re}\; a)\leq \rho(a,\ov a)\leq \rho(a,\xi)+\rho(\xi,\ov a)\leq 2\rho(a,\xi).$$
Now letting $a=z_{n(j(\alpha))}$ and $\xi=u_{j(\alpha)}$, we obtain that
 $$\rho\bigl(z_{n(j(\alpha))}, {\rm Re}\,z_{n(j(\alpha))}\bigr)\to 0.$$
 By taking a further subnet, if necessary,  ${\rm Re}\,z_{n(j(\beta))}$ then converges
 to some $m'$. Note that this implies that $m'\in E$. Since $\rho$ is semi-continuous \cite[p. 103]{ho}, 
$\rho(m,m')=0$ and so $m=m'$. Thus $m\in E\inter Z(b)$.  Hence $Z(b)\inter E\not=\emp$;
a contradiction to our hypothesis.  Therefore $x\notin M^+$.  Since $x$ was an arbitrary zero of $b$,
we conclude that $Z(b)\inter M^+=\emp$.  Due to symmetry, we obviously have that
$Z(b^*)\inter M^-=\emp$, too. 
  \end{proof}

  \medskip
  
   Let us note that the previous result also holds for arbitrary \BP s  (see  Proposition \ref{blaschke}
   at the end of this section).     For the sake of completeness
   we present that result, too, although we will not use this fact in the present paper.
   The proof  itself is based on    Theorem \ref{inters} and on a
   factorization theorem  given by K.  Izuchi.

  Versions of the following  function theoretic lemma are  well known.   What we need here, 
are uniform estimates outside some cones.
For the reader's convenience we present its proof.

\begin{lemma}\label{harm}
Let $u$ be the harmonic function with boundary values $1$ on $\T^+$ and $0$ on $\T^-$
and let $C_\kappa$ be the cone
$$C_\kappa=\{z=x+iy\in \D: |y|\leq \kappa(1-x)\}.$$
Then  there exists $\sigma>0$ such that $1> u(z)\geq 3/4$ on  
$$h^+(C_\sigma):=\{z=x+iy\in \D, 0\leq \sigma (1-x)\leq y\}$$
and $0<u(z)\leq 1/4$ on
$$h^-(C_\sigma):=\{z=x+iy\in \D,  y<0, 0\leq \sigma (1-x)\leq |y|\}.$$
Moreover, $ u(r)=1/2$ for every $r\in \;]-1,1[$, $1/2\leq u\leq 1$ on $\D^+$ and
 $0\leq u\leq 1/2$ on $\D^-$.
\end{lemma}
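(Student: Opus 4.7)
The plan is to exhibit $u$ explicitly via the M\"obius transformation $\phi(z) = \frac{1+z}{1-z}$, which maps $\D$ conformally onto the right half-plane, sending $\T^+$ to the positive imaginary axis, $\T^-$ to the negative imaginary axis, and $]-1,1[$ to the positive real axis. Hence
$$u(z) = \frac{1}{2} + \frac{1}{\pi}\arg\phi(z),$$
where $\arg$ denotes the principal branch on the right half-plane (values in $(-\pi/2,\pi/2)$). This formula defines a bounded harmonic function on $\D$ with the correct continuous boundary values on $\T^\pm$, and hence agrees with $u$ by uniqueness of the bounded Dirichlet solution.

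Writing $z = x+iy$ and decomposing $\arg\phi(z) = \arg(1+z) - \arg(1-z)$ gives
$$\arg\phi(z) = \arctan\frac{y}{1+x} + \arctan\frac{y}{1-x},$$
a sum of two terms each carrying the sign of $y$ (since $1\pm x > 0$ on $\D$). Setting $y = 0$ yields $u(r) = 1/2$ for every $r \in \;]-1,1[$. For $z \in \D^+$ both arctangents are strictly positive, so $u(z) > 1/2$; the upper bound $u \leq 1$ on $\D$ is the maximum principle. The analogous bounds on $\D^-$ follow from the symmetry $u(z) + u(\bar z) = 1$, which is immediate from the formula.

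For the cone estimates, fix any $\sigma \geq 1$, so that $\arctan\sigma \geq \pi/4$. For $z = x+iy \in h^+(C_\sigma)$ we have $y \geq \sigma(1-x) > 0$, whence $\arctan(y/(1-x)) \geq \arctan\sigma$ while $\arctan(y/(1+x)) > 0$, so
$$u(z) \;\geq\; \frac{1}{2} + \frac{1}{\pi}\arctan\sigma \;\geq\; \frac{3}{4}.$$
The strict inequality $u(z) < 1$ follows from ${\rm Re}\,\phi(z) = (1-|z|^2)/|1-z|^2 > 0$, which forces $\arg\phi(z) < \pi/2$. The bound $0 < u(z) \leq 1/4$ on $h^-(C_\sigma)$ is then obtained from $u(z) = 1 - u(\bar z)$, since $\bar z \in h^+(C_\sigma)$ whenever $z \in h^-(C_\sigma)$. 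The only step needing any care is setting up the explicit formula and checking that it solves the Dirichlet problem with the stated boundary data; once that is done, everything else reduces to monotonicity of the arctangent on rays, so there is no real obstacle.
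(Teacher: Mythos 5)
Your proof is correct, but it takes a different route from the paper. The paper works directly with the Poisson integral defining $u$: it lower-bounds the kernel via $1+r^2-2r\cos s\le (1-r)^2+s^2$, integrates to get an expression in arctangents of $\theta/(1-r)$, and then lets $r\to 1$ outside a cone to push the value above $3/4$, with the symmetry $u(z)+u(\bar z)=1$ handling the lower half-disk and the real axis exactly as you do. You instead bypass the integral entirely by identifying $u$ in closed form, $u(z)=\tfrac12+\tfrac1\pi\bigl[\arctan\tfrac{y}{1+x}+\arctan\tfrac{y}{1-x}\bigr]$, via the conformal map $\phi(z)=(1+z)/(1-z)$; the only nontrivial step is the one you flag, namely that this bounded harmonic function with the right boundary values on $\T^\pm$ coincides with $u$, which rests on uniqueness of the bounded solution of the Dirichlet problem with boundary data continuous off the two points $\pm1$ (a standard Lindel\"of-type argument, worth citing explicitly). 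What your approach buys is sharper and more explicit information: the cone estimate holds for every $\sigma\ge 1$ with no limiting argument or implicit dependence of $\sigma$ on an auxiliary constant $C$ and on $r$ near $1$, the strict bounds $0<u<1$ come for free from ${\rm Re}\,\phi>0$, and all the remaining assertions ($u=1/2$ on the real interval, $u(z)+u(\bar z)=1$, the bounds on $\D^\pm$) drop out of the formula by inspection. What the paper's kernel-estimate method buys is independence from an explicit solution: it proceeds straight from the Poisson representation by which $u$ is defined, so no uniqueness theorem is needed, and the same technique of minorizing the kernel works in situations where no closed form is available. Both arguments are complete; yours is the more economical for this particular $u$.
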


\begin{proof}
Note that $u$ has the form 
$$u(re^{i\theta})=\frac{1}{2\pi}\int_0^\pi\frac{1-r^2}{1+r^2-2r\cos(t-\theta)}dt.$$
Now we use the following inequality:
$$ 1+r^2-2r\cos s=(1-r)^2+4r\sin^2(s/2)\leq (1-r)^2 +s^2.$$
Let $\theta \in [0, \pi/4]$. Then 
$$u(re^{i\theta})\geq \int_0^\theta +\int _\theta ^{\pi/2}\geq$$
$$\mbox{$\frac{1+r}{1-r}$}\frac{1}{2\pi}\int_0^\theta \frac{1}{1+\left( \frac{\theta-t}{1-r}\right)^2}dt +
\mbox{$\frac{1+r}{1-r}$}\frac{1}{2\pi}\int _\theta ^{\pi/2}\frac{1}{1+ \left( \frac{t-\theta}{1-r}\right)^2}dt=$$
$$-\frac{1+r}{2\pi} \arctan \left( \frac{\theta-t}{1-r} \right)\Bigl| ^\theta_0+
 \frac{1+r}{2\pi}\arctan \left( \frac{t-\theta}{1-r} \right)\Bigl|^{\pi/2}_\theta=$$
 $$\frac{1+r}{2\pi}\left[   \arctan\left( \frac{\theta}{1-r} \right)+ 
  \arctan\left( \frac{\frac{\pi}{2}-\theta}{1-r} \right) \right].$$
 
 Now if $re^{i\theta}$ stays outside the  cone 
 $$\mathscr C:=\{z=\tilde re^{i\tilde \theta}\in \D:  |\tilde \theta|< C(1-\tilde r)\},$$
 then $C(1-r)\leq \theta\leq \pi/4$ and hence
 $$u(re^{i\theta})\geq  
 \frac{1+r}{2\pi} \left[ \arctan C+  \arctan  \left(\frac{\pi/4}{1-r}\right)\right]
 \buildrel\longrightarrow_{}^{r\to 1}  
 \frac{\arctan C}{\pi} + \frac{1}{2}.$$
 Now $\mathscr C \ss C_\kappa$ with $\kappa=C=\tan\psi$, where $\psi$ is the angle
 between the horizontal axis and the line $y=\kappa (1-x)$ respectively the curve
 $r(\theta)=1-\frac{\theta}{C}$ at the point $1$. 
 
 Thus $u(re^{i\theta})\geq 3/4$ whenever $C$ is large,  $r=r(C)$ close to $1$,
 and $re^{i\theta}\in h^+(C_\sigma)$ for some $\sigma=\sigma(C)$.
 
A change of variable $-t\to s$  shows that $u(z)+u(\ov z)$ is the integral
over the Poisson kernel on the whole interval $[0,2\pi]$. Hence $u(z)+u(\ov z)=1$.
Now if $z\in h^+(C_\sigma)$, then  $\ov z\in h^-(C_\sigma)$ and so 
$$u(\ov z)= 1-u(z)\leq 1-3/4=1/4.$$

Finally, if $z=x$ is real, then $1=u(z)+u(\ov z)=2u(x)$; and so $u(x)=1/2$.

Next let $z\in \D^-$; that is $z=re^{i\theta}$ with $-\pi<\theta<0$.
Then $t-\theta \geq t$ and so
$$u(re^{i\theta})\leq \frac{1}{2\pi}\int_0^\pi\frac{1-r^2}{1+r^2-2r\cos(t)}dt=u(r)=1/2.$$

If $z\in \D^+$, then $u(z)=1-u(\ov z)\geq 1/2$.
\end{proof}

\begin{corollary}
Let $u$ be the harmonic function above. Then
\begin{enumerate}
\item $1/2\leq u\leq 1$ on $M^+$;
\item $0\leq u\leq 1/2$ on $M^-$
\item $u=1/2$ on $E$.
\end{enumerate}
\end{corollary}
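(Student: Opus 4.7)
The plan is to invoke Hoffman's extension theorem---every bounded harmonic function on $\D$ admits a (unique) continuous extension to $M(\H)$---to upgrade the harmonic function $u$ of Lemma \ref{harm} to a continuous function on $M(\H)$, still denoted $u$. Once this is in hand, the three pointwise estimates on $\D^+$, $\D^-$ and $]-1,1[$ already established in Lemma \ref{harm} will propagate automatically by continuity to the closures $M^+$, $M^-$ and $E$.

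Concretely, to prove (1) I would fix $m\in M^+$ and, using the definition $M^+={\rm cl}\,\D^+$, pick a net $(z_\alpha)$ in $\D^+$ such that $\varphi_{z_\alpha}\to m$. By Lemma \ref{harm} we have $1/2\le u(z_\alpha)\le 1$ for every $\alpha$, so passing to the limit via the continuity of the extension gives $1/2\le u(m)\le 1$. Assertion (2) follows verbatim from the $\D^-$-inequality in Lemma \ref{harm}, with $M^-={\rm cl}\,\D^-$ in place of $M^+$. For (3), each $m\in E$ is by definition the limit of a net $(r_\alpha)$ in $]-1,1[$, on which $u$ is identically $1/2$ (last line of Lemma \ref{harm}); hence $u(m)=1/2$ by continuity.

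I do not anticipate any real obstacle here. The only substantive input is Hoffman's extension theorem, which is already invoked in the paper (e.g.\ in the proof of Observation \ref{obs}), and once $u$ is known to extend continuously to $M(\H)$, each of the three claims is a one-line net-continuity argument from the corresponding inequality in the preceding lemma. In that sense this corollary is the payoff of Lemma \ref{harm}: the real work was establishing the pointwise bounds on the cones $h^\pm(C_\sigma)$ and on the real axis, not transporting them to $M(\H)$.
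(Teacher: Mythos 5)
Your argument is correct and is exactly the intended one: the paper states this corollary without proof as an immediate consequence of Lemma \ref{harm}, since $u$, being a bounded harmonic function, lies in $C(M(\H))$ by Hoffman's theorem, and the closed conditions $1/2\leq u\leq 1$, $0\leq u\leq 1/2$, $u=1/2$ pass from $\D^+$, $\D^-$, $]-1,1[$ to their closures $M^+$, $M^-$, $E$ by net continuity. Nothing to add.
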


{\parindent=0pt Question: do we have that $u=1/2$ exactly on $E$?\medskip}

Recall that for $\lambda\in \T$,  the fiber $M_\lambda$ is given by
$$M_\lambda=\{m\in M(\H): m(z)=\lambda\},$$
where $z$ denotes the identity function here.

\begin{theorem}\label{inters}
$M^+\inter M^-=E$.
\end{theorem}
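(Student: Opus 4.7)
The plan is to establish the two inclusions $E \subseteq M^+\cap M^-$ and $M^+\cap M^-\subseteq E$ separately; the nontrivial direction will reduce to Lemma \ref{halfdisk} via a construction of an interpolating Blaschke product.

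For $E \subseteq M^+\cap M^-$, I will simply note that for any $r\in\,]-1,1[\,$ the sequences $r+i/n\in\D^+$ and $r-i/n\in\D^-$ both converge to $r$ in $\D$, and therefore in the Gelfand topology of $M(\H)$. Hence $]-1,1[\,\subseteq M^+\cap M^-$, and since $M^+\cap M^-$ is closed in $M(\H)$, passing to closures gives $E\subseteq M^+\cap M^-$.

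For the harder direction I will argue by contradiction. Suppose $x\in(M^+\cap M^-)\setminus E$. Since $E$ is $\H$-convex by Theorem \ref{convexity}, the construction in its proof produces $f\in\H$ with $\|f\|_\infty\le 1$, $f(x)=0$, and $|f|\ge\delta$ on $E$ for some $\delta>0$. My aim is then to manufacture an interpolating Blaschke product $b$ whose zeros all lie in $\D^-$, with $b(x)=0$ and $Z(b)\cap E=\emp$. Once such a $b$ is in hand, Lemma \ref{halfdisk} immediately yields $Z(b)\cap M^+=\emp$, contradicting $x\in Z(b)\cap M^+$.

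To build $b$, I will use that $V_\varepsilon:=\{m\in M(\H):|f(m)|<\varepsilon\}$ is, for each $\varepsilon>0$, an open neighborhood of $x$ and therefore meets $\D^-$ (because $x\in\overline{\D^-}$). I plan to select inductively a sequence $(z_n)\subseteq\D^-$ with $|f(z_n)|<1/n$ and $|z_n|\to 1$ which, after standard thinning, is pseudohyperbolically separated enough to be an interpolating sequence, and which clusters at $x$ in the Gelfand topology. The associated IBP $b$ then has $Z(b)=\overline{\{z_n\}}^{M(\H)}$; this set contains $x$ so $b(x)=0$, has all zeros in $\D^-$ by construction, and avoids $E$ since every cluster point $m$ of $(z_n)$ satisfies $f(m)=0$ while $|f|\ge\delta$ on $E$.

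The main obstacle is the requirement that $x$ be a sequential cluster point of $(z_n)$ in the non-metrizable space $M(\H)$. To handle it I will invoke Hoffman's theorem that each nontrivial Gleason part is accessible by an interpolating sequence, and verify that the approximating sequence can be placed inside $\D^-\cap V_{1/n}$ using $x\in M^-\setminus E$; should $x$ lie in a trivial Gleason part, a parallel argument based on the Izuchi factorization alluded to in Proposition \ref{blaschke} will be used to still locate an IBP with the three required properties, and then Lemma \ref{halfdisk} will close the case.
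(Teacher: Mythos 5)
There is a genuine gap, and it is exactly the case your fallback clause tries to wave away: trivial Gleason parts. Your construction requires an \IBP\ $b$ with $b(x)=0$, but by Hoffman's theory the zero set of an \IBP\ in $M(\H)$ is contained in $G$, the set of points lying in nontrivial parts; equivalently, $x$ lies in the $M(\H)$-closure of an interpolating sequence if and only if $x\in G$. So for a trivial point $x\in (M^+\inter M^-)\setminus E$ (and such points must be confronted, e.g.\ trivial points of the fibers $M_{\pm 1}$, including Shilov boundary points at which every Blaschke product has modulus one) no \IBP\ --- indeed, in general no Blaschke product --- vanishes at $x$, so the three required properties cannot be met and Lemma \ref{halfdisk} never comes into play. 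Moreover, the proposed rescue via ``the Izuchi factorization alluded to in Proposition \ref{blaschke}'' is circular: the paper's proof of Proposition \ref{blaschke} invokes Theorem \ref{inters} itself to know that $M^+\inter Z(B)$ and $M^-\inter Z(B)$ are disjoint relatively open-closed sets, so it cannot be used inside the proof of Theorem \ref{inters}. The paper handles trivial points by a completely different device: a peak-function argument first forces any $y\in M^+\inter M^-\setminus \D$ into $M_1\union M_{-1}$, and then, since closures of cones at $1$ lie in $G$ (Hoffman), a trivial $x\in M^+\inter M_1$ lies in $\ov{h^+(C_\kappa)}$, where the harmonic function $u$ of Lemma \ref{harm} satisfies $u(x)\geq 3/4$, while $u\leq 1/2$ on $M^-$; hence $x\notin M^-$. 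Some substitute for this second, non-Blaschke argument is indispensable in your plan.

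For points $x\in G$ your route is essentially the paper's Case 1, but even there you add an unnecessary difficulty: you insist that the interpolating sequence clustering at $x$ lie entirely in $\D^-$, which is not obviously arrangeable from $x\in M^-$ alone. The paper avoids this by taking any \IBP\ $b$ with $b(x)=0$ whose zeros lie in a closed neighborhood $U$ of $x$ with $U\inter E=\emp$ (so $b$ has no real zeros), factoring $b=b_1b_2$ according to the sign of the imaginary part of the zeros, and applying Lemma \ref{halfdisk} to each factor; you would do well to adopt that splitting rather than trying to force all zeros into the lower half-disk.
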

\begin{proof}
First we note that $E\ss M^+\inter M^-$ by definition. Now
let $y\in M^+\inter M^-$. Of course, we may
 assume that $y\not\in\D$, since the fact that
$$M^+\inter M^-\inter\D=\;]-1,1[$$ is obvious.
We claim that  $y$ belongs to one of the two fibers $M_1$
or $M_{-1}$. In fact, suppose that $y\in M_\lambda$, where $\lambda\notin\{-1,1\}$.
We may assume that ${\rm Im}\; \lambda>0$.
Let $p_\lambda(z)=(1+\ov{\lambda} z)/2$ be a peak function in $A(\D)$ associated
with $\lambda$. Then $p_\lambda\equiv 1$ on $M_\lambda\ss M^+$, but
$|p_\lambda| \leq 1-\eta<1$ outside  small neighborhoods of $\lambda$ within $\mathbf D$.
In particular $|p_\lambda|\leq 1-\eta$ on $M^-$.
 Hence $y\notin  M^+\inter M^-$; a contradiction.

Let $x\in M(\H)\setminus E$. Since $M^+\union M^-=M(\H)$,  we may assume
that $x\in M^+$.   Also, by the paragraph above, we may assume that
$x\in M_1$. We claim that $x$ does not belong to $M^-$.

{\bf Case 1} $x\in G$. 

 Choose a closed neighborhood $U$ of $x$ in $M(\H)$
so that $ U\inter E=\emp$.
There exists an \IBP\ $b$ with $b(x)=0$ such that $Z(b)\inter \D\ss U^\circ\inter \D$.
Thus, by \cite[p.379]{ga}, $Z(b)\ss U$. 
Hence $Z(b)\inter E=\emp$.   Decompose $b$ in  a product $b=b_1b_2$
of two \IBP s, 
where the zeros of $b_1$ are those with imaginary part strictly positive and  where the zeros
of $b_2$ are those with imaginary part strictly negative.  Note that $b$ has no real zeros.

 Noticing that $Z(b_2)\inter E=\emp$, we obtain from
 Lemma \ref{halfdisk} that $Z(b_2)\inter M^+=\emp$. Thus $b_1(x)=0$.
 Again, since $Z(b_1)\inter E=\emp$ and the zeros of $b_1$ are contained in $M^+$,
 we obtain from Lemma \ref{halfdisk}, that $Z(b_1)\inter M^-=\emp$. Thus $x\notin M^-$.
 Hence, $M^+\inter M^-\inter G\ss E$. 
 \medskip
 
 {\bf Case 2} $x$ is a trivial point with $x\in M^+\inter M_1$.   
 
 Since the $M(\H)$-closure $S$ of every cone 
 $$C_\kappa=\{z=\xi+i\eta\in \D: \xi\geq 1/2, ~~ |\eta|\leq \kappa(1-\xi)\}$$
 is contained in $G$ (see \cite[p. 108]{ho}),  $x$ is not in $S$. By Lemma
 \ref{harm}, the harmonic function $u$ given there is bigger than $3/4$ on  $h^+(C_\kappa)$
 and smaller than $1/4$  on  $h^-(C_\kappa)$ 
if $C_\kappa $  has a sufficiently large opening. Now $ \ov{h^+(C_\kappa)}\ss M^+$,
$\ov{ h^-(C_\kappa)}\ss M^-$, $\ov{h^+(C_\kappa)}\inter \ov{h^-(C_\kappa)}=\emp$  and 
$$M_1=(\ov{h^+(C_\kappa)}\inter M_1) \union (S\inter M_1)
 \union (\ov{h^-(C_\kappa)}\inter M_1).$$
Since $x\in M^+$,  $u\geq 1/2$ on $M^+$ and $u\leq 1/4$ on $\ov{h^-(C_\kappa)}$,
 we deduce that $x\in h^+(C_\kappa)$ and so
 $u(x)\geq 3/4$.  Also, since $u\leq 1/2$ on $M^-$,
we conclude that $x\notin M^-$. 

 To sum up, we have shown that $(M^+\setminus E)\inter M^-=\emp$. 
 Therefore $M^+\inter M^-=E$.
 \end{proof}

 \begin{corollary}\label{fixpoints}
    Let $m\in M(\H)$. The following assertions hold:
    \begin{enumerate}
   
    \item
    $m\in M^+$ if and only if $m^*\in M^-$.
     \item
    The set $E$ coincides with the set of fixed points $F_\tau$ of $\tau$; 
    that is $m=m^*$ if and only if $m\in E$.
    \item
    $m\in E$ if and only if $\ov{m(f^*)}=m(f)$ for any $f\in \H$.
    \end{enumerate}
    \end{corollary}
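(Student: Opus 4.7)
The plan is to derive all three assertions directly from Theorem \ref{inters} together with the characterization of $m^*$ in terms of limits of evaluations (Lemma \ref{mm*}); the heavy lifting is already in Theorem \ref{inters}, so the corollary is essentially a repackaging.

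For (1), I would start by picking a net $(z_\alpha) \subset \D^+$ with $\varphi_{z_\alpha} \to m$, which exists by definition of $M^+$ (and the corona theorem if $m \notin \D$). By Lemma \ref{mm*}, the net $(\varphi_{\bar z_\alpha})$ converges to $m^*$, and since $\bar z_\alpha \in \D^-$ this places $m^* \in M^-$. The reverse implication follows by applying the same argument with the roles swapped, using that $\tau$ is an involution ($m^{**} = m$), which was established in Lemma \ref{invol}.

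For (2), one direction is immediate: if $m \in E$, pick a net $(r_\alpha)$ in $]-1,1[$ with $\varphi_{r_\alpha} \to m$; since $\bar r_\alpha = r_\alpha$, Lemma \ref{mm*} gives $\varphi_{r_\alpha} \to m^*$, and Hausdorffness of $M(\H)$ forces $m = m^*$. For the converse, suppose $m = m^*$. I would first note that $M^+ \cup M^- = M(\H)$: indeed, $\D^+ \cup \D^- \cup \,]-1,1[\,=\D$ is dense in $M(\H)$, and the closure of a finite union is the union of closures, so $M^+ \cup M^- \cup E = M(\H)$, with $E \subseteq M^+$ absorbing the middle piece. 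Hence $m$ lies in $M^+$ or $M^-$; by part (1), $m = m^*$ then forces $m$ to lie in both. Theorem \ref{inters} identifies $M^+ \cap M^- = E$, so $m \in E$.

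Finally, (3) is just the definition $m^*(f) = \overline{m(f^*)}$ combined with (2): $m \in E$ iff $m = m^*$ iff $m(f) = \overline{m(f^*)}$ for every $f \in \H$.

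The only real potential obstacle here is justifying $M^+ \cup M^- = M(\H)$ cleanly, since one has to handle the boundary interval $]-1,1[$ separately; but the observation that $E \subseteq M^+$ (and $\subseteq M^-$) makes this cosmetic. Everything else is a direct net argument built on Lemma \ref{mm*} and Theorem \ref{inters}.
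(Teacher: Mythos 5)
Your proof is correct and takes essentially the same route as the paper: part (1) by the net argument of Lemma \ref{mm*}, part (2) by combining (1) with $M^+\cup M^-=M(\H)$ and Theorem \ref{inters}, and part (3) as a direct reformulation of $m=m^*$. The only difference is that you spell out explicitly the covering $M^+\cup M^-=M(\H)$ (and the symmetric converse in (1) via $m^{**}=m$), which the paper uses implicitly, having already noted it inside the proof of Theorem \ref{inters}.
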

   
     \begin{proof}
 (1)  Let  $m\in M^+$.
 By Lemma \ref{mm*}, if $z_\alpha\to m$, ${\rm Im}\;z_\alpha>0$,
  then $\ov z_\alpha\to m^*$. Thus $m^*\in M^-$.
 
  (2)  Let $m=m^*$. By (1), $m\in M^+\inter M^-$. Using Theorem \ref{inters},
  we conclude that $m\in E$.
  To prove the converse, let $m\in E$.  Choose  a net $r_\alpha\in \;]-1,1[$ converging to $m$.
  Then, by Lemma \ref{mm*}, $r_\alpha=\ov r_\alpha$ converges to $m^*$.  Thus $m=m^*$.
 Since $m^*=\tau(m)$,  it follows that $F_\tau=E$. 
 
  (3) This is merely a reformulation of the assertion that $m=m^*$.
    \end{proof}

 We add the following additional  information on $u$.
 
 \begin{proposition}
The following assertions hold:
\begin{enumerate}
 \item  $u\equiv 1$ on the set of trivial points in $M^+$;
 \item  $u\equiv 0$ on the set of trivial points in $M^-$.
 \end{enumerate}
 \end{proposition}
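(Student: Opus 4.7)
The plan is to prove (1) directly and to deduce (2) by symmetry. Throughout I identify $u$ with its continuous extension to $M(\H)$, which exists by Hoffman's theorem since $u$ is a bounded harmonic function on $\D$.

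Let $x\in M^+$ be a trivial point. Trivial points lie outside $\D$, so $x\in M_\lambda$ for some $\lambda\in\T$. Any net $z_\alpha\in\D^+$ converging to $x$ satisfies $z_\alpha\to\lambda$ in $\C$ with positive imaginary part, forcing $\lambda\in\T^+\cup\{-1,1\}$. If $\lambda\in\T^+$, then $u$ has boundary value identically $1$ on an arc around $\lambda$, so it extends continuously from $\D$ to $\D\cup\{\lambda\}$ with value $u(\lambda)=1$; for any net $z_\alpha\to x$ in $M(\H)$ one has $z_\alpha\to\lambda$ in $\C$, whence $u(x)=\lim u(z_\alpha)=1$.

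The substantive case is $\lambda\in\{-1,1\}$. I would revisit the Poisson computation inside the proof of Lemma~\ref{harm}, which in fact yields
$$\liminf_{r\to 1}\,u(re^{i\theta})\;\geq\;\frac{\arctan C}{\pi}+\frac12$$
whenever $re^{i\theta}$ lies outside the cone $\{|\tilde\theta|\leq C(1-\tilde r)\}$, and the right-hand side tends to $1$ as $C\to\infty$. Given $\e>0$, choose $C$ large, and obtain the matching $\sigma=\sigma(C)>0$ and $r_0<1$ so that $u(z)\geq 1-\e$ on $\{z\in h^+(C_\sigma):|z|\geq r_0\}$. Since $x$ is trivial and $\ov{C_\sigma}\ss G$ (recalled in Case~2 of the proof of Theorem~\ref{inters}), $x$ lies in the open set $M(\H)\setminus\ov{C_\sigma}$; pick an open neighborhood $V$ of $x$ disjoint from $\ov{C_\sigma}$. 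As $x\in\ov{\D^+}$, take a net $z_\alpha\in V\cap\D^+$ with $z_\alpha\to x$; since $z_\alpha\to\lambda\in\{-1,1\}$ in $\C$, eventually $|z_\alpha|\geq r_0$ and $z_\alpha\in\D^+\setminus C_\sigma\ss h^+(C_\sigma)$, so $u(z_\alpha)\geq 1-\e$. Continuity of $u$ on $M(\H)$ yields $u(x)\geq 1-\e$, and letting $\e\downarrow 0$ gives $u(x)=1$.

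Claim~(2) follows from~(1) via the identity $u(\ov z)=1-u(z)$ on $\D$ (obtained by a change of variable in the Poisson integral), which extends by density and continuity to $u\circ\tau=1-u$ on $M(\H)$. Since $\tau$ is a homeomorphism of $M(\H)$ that swaps $M^+$ and $M^-$ and preserves the pseudohyperbolic metric (hence Gleason parts and, in particular, trivial points), any trivial $x\in M^-$ has $\tau(x)$ trivial in $M^+$, so part~(1) gives $u(x)=1-u(\tau(x))=0$. The main obstacle is the fiber-over-$\pm 1$ case: direct continuity of $u$ is unavailable there, and triviality of $x$ is used precisely to separate $x$ from every cone $\ov{C_\sigma}$, thereby placing approach nets into the region where the quantitative Poisson bound applies.
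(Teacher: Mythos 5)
Your handling of the fibers over $\T^+$ and over $1$ follows the same route as the paper (the quantitative Poisson estimate of Lemma \ref{harm} plus the fact that a trivial point escapes the closure of a cone), and deducing (2) from (1) via $u\circ\tau=1-u$ is legitimate. The gap is at the fiber $M_{-1}$: a trivial point of $M^+$ may lie in $M_{-1}$, and your argument, built entirely at the boundary point $1$, cannot reach it. For any $\sigma>0$ the region $h^+(C_\sigma)=\{\xi+i\eta\in\D:\ 0\le\sigma(1-\xi)\le\eta\}$ is bounded away from $-1$ (once $\xi\le 0$ one needs $\eta\ge\sigma$, while points of $\D$ near $-1$ have small imaginary part), so no net in $h^+(C_\sigma)$ converges to any point of $M_{-1}$ and the bound $u(z_\alpha)\ge 1-\e$ is never available for your net. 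Moreover the justification collapses there: the fact $\ov{C_\kappa}\ss G$ invoked from Case 2 of Theorem \ref{inters} concerns the \emph{truncated} cone $\{\xi\ge 1/2,\ |\eta|\le\kappa(1-\xi)\}$, whereas the cone of Lemma \ref{harm} is untruncated; for the untruncated cone one has $M_{-1}\ss\ov{C_\sigma}$ (a whole $\D$-neighborhood of $-1$ lies inside $C_\sigma$), so for a trivial $x\in M_{-1}\inter M^+$ the neighborhood $V$ disjoint from $\ov{C_\sigma}$ does not exist; if instead you mean the truncated cone, then the inclusion $\D^+\setminus C_\sigma\ss h^+(C_\sigma)$ fails for points near $-1$. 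Relatedly, your blanket claim that $\liminf_{r\to 1}u(re^{i\theta})\ge\frac{\arctan C}{\pi}+\frac12$ holds whenever $re^{i\theta}$ lies outside the cone at $1$ is false: points approaching $-1$ inside $\D^+$ at a small fixed angle to the real axis lie outside every cone at $1$, yet $u$ tends to a value near $1/2$ there; the computation in Lemma \ref{harm} is only valid for $\theta\in[0,\pi/4]$.

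The repair is what the paper's ``similar reasoning'' tacitly does: treat $M_{-1}$ with the mirror image of Lemma \ref{harm} at $-1$. Since $u(-\ov z)=u(z)$, the estimate $u\ge 1-\e$ holds on the reflected region $\{\xi+i\eta\in\D:\ 0\le\sigma(1+\xi)\le\eta\}$ near $-1$, the closure of the truncated cone at $-1$ lies in $G$, and then your net argument (or the fiber decomposition as in Case 2 of Theorem \ref{inters}) goes through for trivial points of $M^+\inter M_{-1}$. For the $\lambda=1$ case your proof is fine once you note that $x\notin\ov{C_\sigma}$ follows from $\ov{C_\sigma}\inter M_1\ss S\inter M_1\ss G$ (nets in $C_\sigma$ clustering in $M_1$ eventually lie in the truncated cone), rather than from the false statement $\ov{C_\sigma}\ss G$ itself.
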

 \begin{proof}
If $x\in M^+\inter M_\lambda$, $\lambda\in\T\setminus\{-1,1\}$, then
$u(x)=1$ because $u$ is constant $1$ on $M_\lambda$. 
 If $x$ is   a trivial point in $M_1\inter M^+$, then $x$  lies outside the closure 
 of any cone. Hence  we can replace the 
 number $3/4$  in Lemma \ref{harm} by any number 
 $\sigma<1$  close to one. Thus $u(x)\geq \sigma$
 and so,  $u(x)=1$. A similar reasoning  holds for $x\in M^-$.
 \end{proof}
 
Using Theorem \ref{inters}. we may  generalize Lemma \ref{halfdisk} in the following way.
 
   \begin{proposition}\label{blaschke}
   Let $B$ be a \BP\ all of whose zeros $z_n$ in $\D$ satisfy ${\rm Im}\, z_n<0$.
Suppose that $Z(B)\inter E=\emp$.  Then $Z(B)\inter M^+=\emp$ and $Z(B^*)\inter M^-=\emp$.
   \end{proposition}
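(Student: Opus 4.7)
The plan is to reduce to the interpolating-Blaschke-product case already treated in Lemma \ref{halfdisk}, using a factorization theorem of K.~Izuchi to extract from $B$ a suitable interpolating sub-product that vanishes at any prescribed corona zero of $B$.

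Suppose for contradiction that $x\in Z(B)\inter M^+$. Since $Z(B)\inter E=\emp$, Theorem \ref{inters} (which says $M^+\inter M^-=E$) gives $x\notin M^-$. If $x\in\D$, then ${\rm Im}\,x<0$ because $x$ is a zero of $B$, and hence $x\in\D^-\ss M^-$, a contradiction; so $x\in M(\H)\setminus\D$. The standard theory of zero sets of Blaschke products (see \cite[p.~379]{ga}) places such an $x$ in the non-trivial Gleason part set $G$. Izuchi's theorem now supplies an \IBP\ $b$ whose zero sequence is a subsequence of that of $B$ and which satisfies $b(x)=0$.

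This factor $b$ inherits everything that Lemma \ref{halfdisk} requires: its zeros in $\D$ have negative imaginary part (being among those of $B$), and $Z(b)\inter E\ss Z(B)\inter E=\emp$. Lemma \ref{halfdisk} then yields $Z(b)\inter M^+=\emp$, which contradicts $b(x)=0$ together with $x\in M^+$. Hence $Z(B)\inter M^+=\emp$. The companion statement then follows by symmetry: since $Z(B^*)=\tau(Z(B))$ and Corollary \ref{fixpoints}(1) gives $\tau(M^+)=M^-$, we conclude $Z(B^*)\inter M^-=\tau(Z(B)\inter M^+)=\emp$.

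The main obstacle is to secure Izuchi's theorem in precisely the form used above, namely the assertion that given a Blaschke product $B$ and a corona zero $x\in Z(B)\inter G$, there exists an interpolating sub-Blaschke-product of $B$ whose zero set in $M(\H)$ contains $x$. Granted this, the rest of the argument is formal: it reduces the general Blaschke product case to the interpolating one via Lemma \ref{halfdisk}, and uses Theorem \ref{inters} to dispose of the points in $\D$ and the $\tau$-symmetry from Corollary \ref{fixpoints} to handle $B^*$.
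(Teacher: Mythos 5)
Your reduction to Lemma \ref{halfdisk} has two genuine gaps, both in the passage from the arbitrary Blaschke product $B$ to an interpolating factor. First, the claim that a corona zero $x\in Z(B)\setminus\D$ must lie in $G$ (cited from \cite[p.~379]{ga}) is only valid for interpolating (or Carleson--Newman) Blaschke products; for a general \BP\ the zero set in $M(\H)$ can very well contain trivial points, and your argument simply does not treat that case. Second, the ``Izuchi theorem'' you invoke --- that for any \BP\ $B$ and any $x\in Z(B)\inter G$ there is an interpolating subproduct of $B$ vanishing at $x$ --- is not Izuchi's result and is false in general. The paper's own example right after Proposition \ref{blaschke} exhibits a \BP\ with all zeros $z_n\in\D^-$, $\rho(z_n,1-\frac{1}{n^2})\leq 1/n$, which vanishes identically on the Gleason part $P(x)$ of suitable $x\in (E\setminus\D)\inter G$; the points $L_x(z)$, $z\in\D^+$, are then zeros of $B$ lying in $G\inter M^+$ but outside the closure of $\{z_n\}$, whereas every interpolating subproduct $b$ of $B$ satisfies $Z(b)\ss \ov{\{z_n\}}$, so no such $b$ can vanish there. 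You flag this lemma yourself as ``the main obstacle,'' but it is precisely the point that cannot be secured in the form you state it.

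The paper's actual argument is different and avoids both issues: by Theorem \ref{inters}, $Z(B)\inter M^+$ and $Z(B)\inter M^-$ are disjoint closed (hence open-closed) subsets of $Z(B)$, since $M^+\inter M^-=E$ misses $Z(B)$; Izuchi's factorization theorem \cite[Theorem 2.1]{izu} then splits $B=B^+B^-$ with $Z(B^\pm)=Z(B)\inter M^\pm$, and the factor $B^+$ cannot exist because a nonconstant \BP\ factor of $B$ has zeros in $\D$, all of which lie in $\D^-\ss M^-\setminus M^+$, contradicting $Z(B^+)\ss M^+$. This handles trivial points of $Z(B)$ automatically and needs no extraction of interpolating subproducts. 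Your treatment of the points of $\D$ and the symmetry argument for $B^*$ via Corollary \ref{fixpoints} are fine, but the core of the proof is missing as written; to repair it you should replace the interpolating-subproduct step by the zero-set splitting and factorization argument.
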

   
   \begin{proof}
 The hypothesis  $Z(B )\inter E=\emp$ and the fact that $M^+\inter M^-=E$ (Theorem \ref{inters})
     imply that
 $M^+\inter Z(B)$ and $M^-\inter Z(B)$ are disjoint, open-closed sets in $Z(B)$. 
   Suppose that both sets are nonempty.
  Then, by \cite[Theorem 2.1]{izu},    
  $B=B^+B^-$, where $Z(B^+)=M^+\inter Z(B)$, $Z(B^-)=M^-\inter Z(B)$.
  But $B$, and hence $B^+$, has no zeros in $\D^+$. Thus the factor $B^+$ does
  not exist. This contradiction shows that $Z(B)\inter M^+=\emp$.
    \end{proof} 
  
Finally we remark, that if $B$ is a \BP\ all of whose   zeros $z_n$ in $\D$ satisfy ${\rm Im}\, z_n<0$,
then  $Z( B)\inter M^+$ can be big, though. 
Just take the zeros $z_n$ in $\D^-$ with $\rho(z_n, 1-\frac{1}{n^2})\leq 1/n$ and let 
   $B$ be the associated \BP.
  Then $B(r)\to 0$ as $r\to 1$ and so  $B$ vanishes identically on every Gleason part $P(x)$
  associated with a point $x\in E\setminus \D$. We claim that, 
  $(\ov{P(x)}\inter M^+)\setminus E\not=\emp$
  and $(\ov{P(x)}\inter M^-)\setminus E\not=\emp$. In fact, suppose that 
  $r_\alpha\to x$, $r_\alpha\in ]0,1[$.
  Then $w_\alpha:=\frac {r_\alpha+ z}{1+r_\alpha z}\to L_x(z)$, 
 $w_\alpha\in \D^+$ for $z\in \D^+$, $\ov{w_\alpha}\to (L_x(z))^*$ and 
 $\ov{w_\alpha}\to L_x(\ov z)$. Since the Hoffman map $L_x$ is a bijection, 
 $L_x(\ov z)\not= L_x(z)$, and so $L_x(z)\not=(L_x(z))^*$. Thus, by Corollary \ref{fixpoints},
 $L_x(z)\in (P(x)\inter M^+)\setminus E$ and $L_x(\ov z)\in (P(x)\inter  M^-)\setminus E$.
  In particular, $\emp\not= P(x)\inter M^+\ss Z(B)$.
  
  Such a phenomenon does not occur when $b$ is an \IBP, since $Z(b)\ss M^-$ 
  whenever the zeros in $\D$ are in the lower half-disk. Thus the fact that $M^+\inter M^-=E$
  implies that no point in $M^+\setminus E$ can be  a zero of $b$.

\section{The covering  dimensions of $E$ and $M^+$}

First let us recall the definition of the notion of covering dimension (or \v Cech-Lebesgue
dimension) as given in \cite[p. 54]{eng} or \cite[p. 111]{pe}.
Let $X$ be  a normal topological space. Then $X$ is said to have dimension $n$, denoted by 
${\rm dim}\; X=n$,   if $n$ is the smallest integer such that every finite open covering
of $X$ has a finite open refinement of order $n$. Here, as usual, the order of  a
family $\mathcal A$  of subsets of $X$ is the largest integer $n$ such that 
$\mathcal A$ contains $n+1$ sets with a non-empty intersection.

In order to determine the covering dimension of $E$,  we need the following 
result from \cite[p. 119]{pe}. Recall  that a closed set $C$ {\sl separates} two
disjoint closed sets $E$ and $F$  in a normal space $X$ if $X\setminus C=G\union H$,
where $G$ and $H$ are two disjoint open sets with $E\ss G$ and $F\ss H$.

\begin{proposition}\label{dimension}
If $X$ is a normal space, the following assertions are equivalent:
\begin{enumerate}
\item  ${\rm dim}\, X\leq n$;
\item  For  each family of $n+1$ pairs of closed sets 
$$\{(E_1,F_1), \dots, (E_{n+1},F_{n+1})\}$$
where $E_i\inter F_i=\emp$, there  exists a family $\{C_1,\dots, C_{n+1}\}$
of closed sets such that $C_i$ separates $E_i$ and $F_i$ and $\Inter_{i=1}^{n+1} C_i=\emp$.
\end{enumerate} 
\end{proposition}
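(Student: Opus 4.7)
The plan is to prove both implications $(1)\Rightarrow(2)$ and $(2)\Rightarrow(1)$, which together constitute a classical characterization of covering dimension going back to Eilenberg--Otto and Hemmingsen. Since the proposition is quoted from Pears's monograph, in the actual write-up I would likely just cite it; what follows is the strategy I would use to reconstruct the proof from scratch.

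For $(1)\Rightarrow(2)$, I would start from the $n+1$ pairs $(E_i,F_i)$ and use normality (Urysohn's lemma) to produce continuous functions $f_i:X\to[-1,1]$ with $f_i\equiv -1$ on $E_i$ and $f_i\equiv 1$ on $F_i$. Bundled together they yield a map $f=(f_1,\dots,f_{n+1}):X\to Q:=[-1,1]^{n+1}$, and the preimages $C_i:=f_i^{-1}(0)$ are closed sets each separating the corresponding pair. It remains to arrange $\bigcap_{i=1}^{n+1} C_i=\emptyset$, which is the same as asking that the origin of $Q$ not lie in $f(X)$. The dimension hypothesis is precisely what allows this: using the equivalent reformulation of $\dim X\le n$ as ``every continuous $A\to S^{n}$ from a closed $A\subseteq X$ extends to $X$'', the map $f\big|_{f^{-1}(\partial Q)}:f^{-1}(\partial Q)\to \partial Q\cong S^{n}$ extends to a map $\tilde g:X\to\partial Q$, and replacing $f$ by $\tilde g$ gives new separators $\tilde C_i=\tilde g_i^{-1}(0)$ that still separate $(E_i,F_i)$ and whose intersection is empty because $\tilde g$ avoids the origin.

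For $(2)\Rightarrow(1)$, given a finite open cover $\{U_1,\dots,U_k\}$ I would apply the shrinking lemma to obtain closed $A_i\subseteq U_i$ still covering $X$, and then iteratively invoke the separator hypothesis on carefully chosen $(n+1)$-tuples of pairs among $\{(A_i, X\setminus U_i)\}$. The separators $C_i$ have empty intersection, so the associated open sides $X\setminus C_i$ partition the space into pieces of controlled multiplicity; turning this into an open refinement of order at most $n$ is the standard Hemmingsen combinatorial argument and proceeds by induction on $k$.

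The main obstacle is the perturbation step in $(1)\Rightarrow(2)$: getting $f$ to miss the origin of $Q$. This is exactly the content of the Eilenberg--Otto theorem and is not routine, since it rests either on the nontrivial extension criterion for maps into $S^n$ or on a direct covering-theoretic refinement of the finite cover $\{f_i^{-1}((-\infty,0)),\,f_i^{-1}((0,\infty))\}_{i=1}^{n+1}$ using $\dim X\le n$. Because this step is the technical heart of dimension theory and is already treated in detail in \cite{pe,eng}, my plan is to delegate to those references and merely record the equivalence in the form most convenient for the arguments that follow.
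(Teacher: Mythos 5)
Your proposal is fine: the paper itself gives no proof of this proposition and simply cites it from Pears \cite[p.\ 119]{pe}, which is exactly what you say you would do in the write-up, and your sketch of the classical Eilenberg--Otto/Hemmingsen argument (Urysohn functions into $[-1,1]^{n+1}$, extension over the closed preimage of the boundary sphere using $\dim X\le n$, and the converse via shrinking and separators) is the standard correct reconstruction. So the approach matches the paper's, with your added sketch being accurate background rather than a divergence.
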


\begin{theorem}\label{cover}
a) Let $E$ be the closure of $]-1,1[$ in $M(\H)$. Then the covering dimension of $E$ is one.

b) The covering dimension of the closure, $M^+$, of $\{z\in \D: {\rm Im}\, z>0\}$ in $M(\H)$ is two.
\end{theorem}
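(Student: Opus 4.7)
For part (b), the bound $\dim M^+\le 2$ is immediate from Su\'arez's theorem $\dim M(\H)=2$ together with closed-subset monotonicity of the covering dimension. For the reverse inequality I would show $M_\lambda\ss M^+$ for every $\lambda\in\T^+$: by Lemma \ref{mm*}, each $m\in M_\lambda$ is a limit of point evaluations $\varphi_{z_\alpha}$ with $z_\alpha\to\lambda$, and since $\mathrm{Im}\,\lambda>0$ the $z_\alpha$ eventually lie in $\D^+$, so $m\in\ov{\D^+}=M^+$. Combined with Su\'arez's $\dim M_\lambda=2$ and monotonicity, this gives $\dim M^+\ge 2$.

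For part (a), the inequality $\dim E\ge 1$ is trivial because $E$ contains the closed interval $\{\varphi_t:t\in[-1/2,1/2]\}$ as a closed subspace. For the upper bound I would apply the separator criterion of Proposition \ref{dimension} with $n=1$. A preliminary step is to verify that $\H_\R|_E$ is uniformly dense in $C(E,\R)$: it contains the constants, and it separates points because every $f\in\H$ decomposes as $f=u+iv$ with $u=(f+f^*)/2$ and $v=(f-f^*)/(2i)$ in $\H_\R$, and on $E$ these restrict to $\mathrm{Re}\,f$ and $\mathrm{Im}\,f$ by Observation \ref{obs2}; Stone--Weierstrass then applies. Given two pairs $(A_1,B_1)$, $(A_2,B_2)$ of disjoint closed sets in $E$, combining this density with Urysohn's lemma on the compact Hausdorff space $E$ produces $h_i\in\H_\R$ with $h_i\le -1/2$ on $A_i$ and $h_i\ge 1/2$ on $B_i$. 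Thus for each $c\in(-1/2,1/2)$ the closed set $\{h_i=c\}\cap E$ separates $A_i$ from $B_i$ in $E$.

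The crucial and, I expect, hardest step is to choose $c_1,c_2\in(-1/2,1/2)$ so that the two separators $\{h_1=c_1\}\cap E$ and $\{h_2=c_2\}\cap E$ have empty common intersection, i.e.\ so that $(c_1,c_2)\notin(h_1,h_2)(E)\ss\R^2$. Since $]-1,1[$ is dense in $E$, this compact image is the closure of the real-analytic planar curve $t\mapsto(h_1(t),h_2(t))$, and the task is to show that this closure has empty interior in a neighborhood of the origin. The natural strategy is to work with the holomorphic function $F=h_1+ih_2\in\H$ and combine the $\H_\R$-convexity of closed subsets of $E$ (Theorem \ref{convexity}) with a Blaschke-factoring argument in the spirit of Lemma \ref{halfdisk} and Proposition \ref{blaschke}, aiming to show that $F$ admits an arbitrarily small constant perturbation $F+(c_1+ic_2)$ which is bounded away from zero on $E$. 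Once this is in hand, the resulting separators are disjoint and the criterion of Proposition \ref{dimension} yields $\dim E\le 1$.
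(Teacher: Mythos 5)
Your treatment of part a) is sound up to, but not including, the decisive step, and it is exactly there that the proof stops. The preliminary reduction is fine and in fact differs from the paper in a pleasant way: instead of the $\H$-convexity of $A_i\union B_i$ (Theorem \ref{convexity}) plus the Shilov idempotent theorem, you obtain the near-Urysohn functions $h_i\in\H_\R$ from Stone--Weierstrass, using Observation \ref{obs2} to see that $\H_\R|_E$ is a point-separating real subalgebra of $C(E,\R)$; that works. But the whole content of $\dim E\le 1$ sits in the step you leave as a ``natural strategy'': producing $c_1,c_2$ with $\{h_1=c_1\}\inter\{h_2=c_2\}\inter E=\emp$. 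Moreover, the strategy you sketch --- keep $h_1,h_2$ fixed and subtract a small constant from $F=h_1+ih_2$ so that the result is bounded away from $0$ on $E$ --- cannot work as a general principle. The set $(h_1,h_2)(E)$ is the closure of the curve $t\mapsto(h_1(t),h_2(t))$, $t\in\;]-1,1[$, and such closures can contain two-dimensional disks: taking an interpolating sequence of radii $r_n\uparrow 1$ and interpolating a sequence of values dense in a disk, one gets $f\in\H$ whose values $f(t)$, $t\in[0,1)$, are dense in a disk of positive radius; writing $f=h_1+ih_2$ with $h_1=(f+f^*)/2$, $h_2=(f-f^*)/(2i)\in\H_\R$ shows the closure of such a curve need not have empty interior. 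Your $h_i$ are constrained only to be uniformly close to Urysohn functions, and since you do not yet know $\dim E\le1$ you cannot argue that their joint image omits points of $(-1/2,1/2)^2$ --- that would be circular. So the perturbation must be by small \emph{functions}, not small constants, and this is precisely the nontrivial input of the paper: it perturbs the pair into an invertible pair $(g_1,g_2)$ in $\H_\R$ using ${\rm tsr}(\H_\R)=2$ from \cite{mw} (the perturbation being uniformly small on $M(\H)$, each $g_i$ keeps the right signs on $A_i$ and $B_i$, and $Z(g_1)\inter Z(g_2)=\emp$ yields the disjoint separators $Z(g_i)\inter E$); alternatively, the unnumbered Lemma following the proof of Theorem \ref{cover} supplies the Blaschke-product surgery (Riesz factorization, Frostman shifts, approximation by interpolating Blaschke products with real zeros, moving zeros) that you only allude to. Without one of these ingredients Proposition \ref{dimension} cannot be applied and the upper bound is not established.

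For part b) your upper bound is the paper's (monotonicity for closed subspaces). Your lower bound goes through $M_\lambda\ss M^+$ for $\lambda\in\T^+$ (correct, though the approximating net of point evaluations is provided by the corona theorem rather than Lemma \ref{mm*}) together with the claim $\dim M_\lambda=2$. That fiber statement is strictly stronger than what the paper quotes and does not follow formally from $\dim M(\H)=2$; you would have to locate it explicitly in \cite{su}. The paper's route avoids it: $M(\H)=M^+\union M^-$ with both sets closed and $\dim M^+=\dim M^-$ because $\tau$ is a homeomorphism interchanging them (Corollary \ref{fixpoints}), so the finite sum theorem together with $\dim M(\H)=2$ forces $\dim M^+=2$. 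Your lower bound $\dim E\ge 1$ via the closed arc $\{\varphi_t: t\in[-1/2,1/2]\}$ is fine (the paper instead observes that $E$ is a continuum).
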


\begin{proof}
a)  For $j=1,2$, let $(E_j,F_j)$ be two pairs of disjoint closed sets in $E$. By Theorem \ref{convexity},
the sets  $E_j\union F_j$ are $\H$-convex. So the maximal ideal space of  the
algebras $A_j=\ov{\H|_{E_j\union F_j}}$ equals $X:=E_j\union F_j$.  Since $E_j$
and $F_j$ are open-closed in $X$,  Shilov's idempotent theorem (see for example \cite[p. 88]{gam}),
yields a function $q_j\in A_j$ such that $q_j\equiv 1$ on $F_j$ and $q_j\equiv 0$ on $E_j$.
Thus there exists $f_j\in \H$  such that $f_j\sim 1$  on $F_j$ and $f_j\sim 0$ on $E_j$.
Let $h_j=f_jf_j^*$. Then $h_j\in \H_\R$ and $h_j$ is real valued on $]-1,1[$, hence on $E$. 
Moreover, since for $x\in E$ one has  $f^*(x)=\ov{f(x)}$ (\ref{obs2}), 
 $h_j$ is close to $1$ on $F_j$ and close to $0$ on $E_j$. Let
$k_j=2h_j-1$.Then $k_j\in \H_\R$ is real valued on $E$, too, and $k_j$ is close to $1$
on $F_j$ and close to $-1$ on $E_j$. 
Consider the pair $(k_1,k_2)$. Since $\H_\R$ has the topological stable rank 2 (\cite{mw}),
\footnote{for a definition see the next section}
there  is an invertible  pair $(g_1,g_2)$ of  functions  in $\H_\R$ so that $g_j$ and $k_j$
stay very close to each other. In particular, the  $g_j$ are real valued on $E$ and 
$g_j$ remains close to $-1$ on  $E_j$ and close to $1$ on $F_j$. But $Z(g_1)\inter Z(g_2)=\emp$.
Thus we may choose $C_j =Z(g_j)\inter E$ to conclude that $C_j$ separates $E_j$ and $F_j$,
(just take  $G_j=\{x\in E: g_j<0\}$ and $H_j=\{x\in E: g_j>0\}$.)
Hence, by Proposition \ref{dimension}, the covering dimension of $E$ is less than or equal to one.
The dimension cannot be zero, though, since $E$ is a continuum. Thus ${\rm dim}\, E=1$.\medskip

b) The fact that the covering dimension of the closure, $M^+$, of $\{z\in \D: {\rm Im}\, z>0\}$ in $M(\H)$ is two follows from Su\'arez's  result \cite{su} that ${\rm dim}\, M(\H)=2$
and the sum-property for the dimension \cite[p.42, Theorem 1.5.3 ]{eng}
that tells us  that if $X$ is the union of a finite (or countably infinite)  number of closed sets $X_j$
with ${\rm dim}\, X_j\leq d$, then ${\rm dim}\, X \leq d$.  Here we have $X=M^+\union M^-$
and, due to symmetry, ${\rm dim}\, M^+= {\rm dim}\, M^-$.
\end{proof}

Instead of using in the above proof the full power of the fact that ${\rm tsr}\,\H_\R=2$,
we can also prove part a) of Theorem \ref{cover}
 by applying the following Lemma.
\begin{lemma}

Let $(k_1,k_2)$ be a pair of  functions in $\H_\R$. Then, for every $\e>0$,
 there exists a pair $(b_1K_1, b_2K_2)$ of functions   in $\H_\R$
  such that 
 \begin{enumerate}
 \item the $b_j$ are \IBP s having only real zeros;
 \item $b_1$ and $b_2$ have no common zeros on $M(\H)$;
 \item $K_1$ and $K_2$ are zero free on $E$;
 \item $||b_jK_j-k_j||_E<\e$.
 \end{enumerate}
\end{lemma}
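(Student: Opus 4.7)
The strategy is to write each $k_j$ as a Blaschke product of its real zeros times a one-signed factor on $E$, and then slightly perturb that Blaschke product into an interpolating one whose real zeros are separated from those of the other index.

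\textbf{Step 1 (Factorization).} Let $\{r_n^{(j)}\}_n\subset\;]-1,1[$ denote the real zeros of $k_j$ in $\D$, counted with multiplicity. Since $k_j\in\H_\R$, its non-real zeros occur in conjugate pairs, so the Blaschke product $B_j^\R$ on $\{r_n^{(j)}\}$ lies in $\H_\R$, and $h_j:=k_j/B_j^\R\in\H_\R$ has no real zeros in $]-1,1[$. Being real-valued and nonvanishing on this connected interval, $h_j$ has a definite sign $\sigma_j\in\{\pm 1\}$ there; by Observation~\ref{obs2} and continuity, $\sigma_j h_j\geq 0$ on $E$.

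\textbf{Step 2 (Perturbation).} Perturb each $r_n^{(j)}$ to a nearby $\tilde r_n^{(j)}\in\;]-1,1[$ so that (i)~$\{\tilde r_n^{(j)}\}_n$ consists of distinct points forming an interpolating sequence for each $j$; (ii)~$\inf_{n,m}\rho(\tilde r_n^{(1)},\tilde r_m^{(2)})>0$; and (iii)~$\sum_n\rho(r_n^{(j)},\tilde r_n^{(j)})$ is so small that the Blaschke product $b_j$ on $\{\tilde r_n^{(j)}\}$ satisfies $\|b_j-B_j^\R\|_\infty<\e/(3(1+\|h_j\|_\infty))$. Then each $b_j$ is an interpolating Blaschke product with real zeros. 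Moreover, using the standard characterization of $Z(b)\cap M(\H)$ as the $M(\H)$-closure of the zero sequence of an interpolating Blaschke product (the reference to Garnett used already in Lemma~\ref{halfdisk}), any common zero $x\in Z(b_1)\cap Z(b_2)$ would admit subnets of both $\{\tilde r_n^{(j)}\}$ tending to $x$ in $M(\H)$, forcing $\liminf\rho(\tilde r_n^{(1)},\tilde r_m^{(2)})=0$ and contradicting (ii); hence $Z(b_1)\cap Z(b_2)=\emp$.

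\textbf{Step 3 (Assembly and error estimate).} Set $K_j:=h_j+\sigma_j\delta$ with $\delta:=\e/3$. Then $K_j\in\H_\R$, and $\sigma_j K_j\geq\delta>0$ on $E$, so $K_j$ is zero-free on $E$. The identity
$$b_j K_j-k_j=(b_j-B_j^\R)h_j+\sigma_j\delta\, b_j,$$
combined with $|b_j|\leq 1$ on $M(\H)$, yields
$$\|b_j K_j-k_j\|_E\leq\|b_j-B_j^\R\|_\infty\,\|h_j\|_\infty+\delta<\tfrac{\e}{3}+\tfrac{\e}{3}<\e.$$

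\textbf{Main obstacle.} The delicate step is the perturbation in Step~2: simultaneously achieving (i), (ii), and (iii) requires a careful greedy construction. For a finite real zero set, interpolation is automatic once the zeros are made distinct, and arbitrarily small Euclidean perturbations suffice. For an infinite Blaschke sequence, one processes the zeros in decreasing order of $1-|r_n^{(j)}|$, placing each $\tilde r_n^{(j)}$ in a hyperbolic window around $r_n^{(j)}$ disjoint from all previously placed perturbations of both sequences, with window radii chosen to make the total hyperbolic deviation summable. When the original real zero set is pathologically dense or has high local multiplicities relative to the desired separation, one first splits $B_j^\R$ into a Carleson--Newman product of finitely many interpolating sub-factors and applies the above perturbation to each factor before recombining.
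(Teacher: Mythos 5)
The gap is in Step 2. Your scheme requires perturbing the real zeros of the \emph{full} real-zero Blaschke factor $B_j^\R$ of $k_j$ into an interpolating sequence by moves whose pseudohyperbolic sizes are summably (or at least uniformly) small, so that $||b_j-B_j^\R||_\infty$ is small. This is impossible in general: an interpolating sequence is uniformly $\rho$-separated and has only a bounded number of points in any pseudohyperbolic ball of fixed radius, whereas the real zeros of an arbitrary $k_j\in \H_\R$ can cluster without bound. Take, for instance, $k_j$ the Blaschke product with a zero of multiplicity $2^n$ at $1-4^{-n}$ (the Blaschke condition $\sum_n 2^n4^{-n}<\infty$ holds, and $k_j\in\H_\R$). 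If each zero is moved by pseudohyperbolic distance at most $\eta/2$, all $2^n$ perturbed points lie in a $\rho$-ball of radius $\eta/2$, so they cannot be pairwise $\eta$-separated once $2^n>1$; forcing separation makes at least $2^n-1$ points move by more than $\eta/2$, so your condition (iii) fails, and in fact no uniformly small displacement of these zeros produces an interpolating --- or even a Carleson--Newman --- sequence (a finite union of separated sequences has a bounded number of points per fixed $\rho$-ball, while $2^n\to\infty$). Your fallback, ``first split $B_j^\R$ into a Carleson--Newman product of finitely many interpolating sub-factors,'' is circular: a general Blaschke product is not a Carleson--Newman Blaschke product, and whether an arbitrary Blaschke product can be uniformly approximated by interpolating Blaschke products is exactly the hard point that a greedy placement cannot settle. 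Hence the bound $||b_j-B_j^\R||_\infty<\e/(3(1+||h_j||_\infty))$, on which Step 3 rests, is unjustified (and restricting the norm to $E$ does not rescue the scheme).

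The paper gets around precisely this obstruction by a Frostman shift. With $v_j$ the real-zero Blaschke factor of $k_j$, the shift $w_j=\frac{v_j-\e}{1-\e v_j}$ is, by \cite{mn}, a Carleson--Newman Blaschke product, lies again in $\H_\R$, and is uniformly close to $v_j$. Only its \emph{real-zero} part $d_j$ --- now a Carleson--Newman Blaschke product with real zeros --- must be approximated by an interpolating Blaschke product with real zeros, which is feasible; the non-real zeros created by the shift occur in conjugate pairs, so that factor is nonnegative on $]-1,1[$ and, after adding $\e$, is absorbed into the factor $K_j$, which is then zero free on $E$ together with the non-real Blaschke part and the zero-free factor of $k_j$. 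Your Steps 1 and 3 are fine (the sign of $h_j$ on $E$ via Observation \ref{obs2}, the algebraic error estimate, and the disjointness of $Z(b_1)$ and $Z(b_2)$ from $\rho$-separation of the zero sequences via Hoffman's semicontinuity as in Lemma \ref{halfdisk}); to repair the argument, replace Step 2 by the Frostman-shift factorization, and note that the paper also achieves $\inf_\D(|b_1|+|b_2|)>0$ not by a greedy placement but by Frostman-shifting a subfactor of $\tilde b_2$.
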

\begin{proof}
Let $k_j=B_jF_j$ be the Riesz factorization of $k_j$. Here $B_j$ is a \BP\ and $F_j$
is zero free on $\D$. Since $k_j\in \H_\R$, the zeros of $B_j$ are symmetric to the real axis and so
$B_j$, as well as $F_j$, belong to $\H_\R$. We may assume that $F_j\geq 0$ on $]-1,1[$. 
Then  for $\e>0$, the functions $F_j+\e$ have no zeros on $E$. Let $B_j=v_ju_j$, where
$v_j$ is the \BP\ formed with the real zeros of $B_j$. 
By \cite{mn}, the Frostman shifts $w_j:=\frac{v_j-\e}{1-\e v_j}$ are \CNBP s. 
Write $w_j$ as $w_j=d_je_j$, where $d_j$ is the factor  of $w_j$ formed with the real zeros.
Note that $w_j,d_j,e_j\in \H_\R$. Since $d_j$ is a \CNBP\ with real zeros only, it can be
uniformly approximated by \IBP s with real zeros. Let $W_j=e_ju_j$.
Due to the symmetry of the zeros,
 $W_j(a)=0$ if and only if $W_j(\ov a)=0$. Therefore, for $r\in \;]-1,1[$, 
 $$W_j(r)=\prod_{a: {\rm Im}\, a>0}\frac{\ov a}{|a|} \frac{a-r}{1-\ov a r}\cdot \frac{ a}{|\ov a|} \frac{\ov a-r}{1-a r}=\prod_{a: {\rm Im}\, a>0}\frac{|a-r|^2}{|1- a r|^2}.$$
Thus $W_j\geq 0$ on $]-1,1[$.
Hence $W_j+\e$ is zero free on $E$.
 Thus we are able to  approximate each $k_j$ by functions of the form ${\tilde b}_j K_j$, 
 where ${\tilde b}_j$ is an \IBP\ with real zeros only and where
 $$K_j=(F_j+\e)(W_j+\e).$$
 Let $b_1=\tilde b_1$.
 By moving those zeros of $\tilde b_2$ that are hyperbolically close to those of $\tilde b_1$,
 we may approximate $\tilde b_2$ by an \IBP\ $b_2$ so that $\inf_\D( |b_1|+|b_2|)\geq \delta>0$;
for example by replacing  $\tilde b_2=b_2^{(1)}b_2^{(2)}$  by 
the \IBP\  $b_2^{(1)}\frac{b_2^{(2)}-\e}{1-\e b_2^{(2)} }$. 
 The tuple $(b_1K_1,b_2K_2)$ is now the desired item.
\end{proof}


\section{The Bass and topological stable ranks for $C(M(\H))_{\rm\ssc sym}$}

In this section we determine some $K$-theoretic data for the algebra $C(M(\H))_{\rm\ssc sym}$.
Our construction  will use the following lemma.

  \begin{lemma}\label{reflection}
     Let $q\in C(M^+,\C)$. Suppose that $q$ is real-valued on $]-1,1[$.
     Then $q$ admits a unique  extension to $C(M(\H))_{\rm\ssc sym}$.
     \end{lemma}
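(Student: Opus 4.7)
The plan is to define the extension in the only way symmetry allows and verify continuity via a pasting argument. Set
\[
Q(m)=\begin{cases} q(m) & \text{if } m\in M^+,\\ \ov{q(\tau(m))} & \text{if } m\in M^-.\end{cases}
\]
Since $\D=\D^+\cup\;]-1,1[\,\cup\,\D^-$ is dense in $M(\H)$, we have $M(\H)=M^+\cup M^-$, so $Q$ is defined everywhere. The first task is to check consistency on the overlap $M^+\inter M^-$. By Theorem \ref{inters} this overlap is exactly $E$, and by Corollary \ref{fixpoints}(2) we have $\tau(m)=m$ for every $m\in E$. Thus for $m\in E$ the second formula reads $\ov{q(m)}$, which agrees with the first formula $q(m)$ provided $q$ is real on $E$. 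But $q$ is continuous on $M^+\supset E$ and real on $]-1,1[$, whose closure in $M(\H)$ is $E$; so $q$ is real on $E$ by continuity, and the two definitions match.

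Next I would establish continuity. Since $\tau$ is a topological involution (Lemma \ref{invol}) and, by Corollary \ref{fixpoints}(1), sends $M^-$ into $M^+$, the map $\ov{q\circ\tau}$ is continuous on $M^-$. Together with the continuity of $q$ on $M^+$, and the fact that $M^+$ and $M^-$ are closed in $M(\H)$ with $M(\H)=M^+\union M^-$, the pasting lemma for continuous functions (using the consistency verified above on the intersection $E$) gives $Q\in C(M(\H),\C)$.

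Then I would verify the symmetry $Q\circ\tau=\ov Q$ on all of $M(\H)$. For $m\in M^+$, Corollary \ref{fixpoints}(1) gives $\tau(m)\in M^-$, so $Q(\tau(m))=\ov{q(\tau(\tau(m)))}=\ov{q(m)}=\ov{Q(m)}$, using that $\tau$ is an involution. For $m\in M^-$ we have $\tau(m)\in M^+$, so $Q(\tau(m))=q(\tau(m))=\ov{\ov{q(\tau(m))}}=\ov{Q(m)}$. By Corollary \ref{repro}, this places $Q$ in $C(M(\H))_{\rm\ssc sym}$, and clearly $Q|_{M^+}=q$.

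Uniqueness is immediate: any extension $\widetilde Q\in C(M(\H))_{\rm\ssc sym}$ agreeing with $q$ on $M^+$ must, by the symmetry $\widetilde Q(\tau(m))=\ov{\widetilde Q(m)}$, satisfy $\widetilde Q(m)=\ov{\widetilde Q(\tau(m))}=\ov{q(\tau(m))}$ for $m\in M^-$, since $\tau(m)\in M^+$. Thus $\widetilde Q=Q$. The only subtle step is the consistency check on $E$, which is precisely why one needs the hypothesis that $q$ be real on $]-1,1[$ together with the identification of $M^+\inter M^-$ as $E$ furnished by Theorem \ref{inters}; everything else is a routine pasting.
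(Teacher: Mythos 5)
Your proposal is correct and follows essentially the same route as the paper: define the extension by reflection via $\tau$, use Theorem \ref{inters} and Corollary \ref{fixpoints} to see the two formulas agree on $E$ (where $q$ is real by continuity from $]-1,1[$), and paste the two continuous pieces on the closed sets $M^+$ and $M^-$. Your explicit verification of the symmetry on all of $M(\H)$ via Corollary \ref{repro} and of uniqueness are minor elaborations of points the paper leaves implicit.
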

     \begin{proof}
     Let $f$ be defined as
     $$f(m)=\begin{cases} q(m)& \text{if $m\in M^+,$}\\
     \ov{q(m^*)} & \text{ if $m\in M^-$.}
     \end{cases}
     $$
     Since $M^+\inter M^-=E$ (Theorem \ref{inters}) 
     and $m=m^*$ on $E$ (Corollary \ref{fixpoints}),
      the real valuedness of  $q$  on $]-1,1[$, hence
     on $E$, implies that $f$ is well defined.  Also, the continuity of $q$ on $M^+$
     implies the continuity of $m\mapsto  \ov{q(m^*)}$ whenever $m\in M^-$.
     In fact, let $m_\alpha$ be  a net  in $M^-$ converging to $m$.  Then 
     $m_\alpha^*=\tau(m_\alpha)$ converges to $\tau(m)=m^*$ by Lemma \ref{invol}. Hence,
using Corollary \ref{fixpoints}(1),
     $$\ov {q(m_\alpha^*)}\to \ov {q(m^*)}.$$
     Thus $f$ is continuous on $M(\H)$. Since  for $a\in\D$, $(\varphi_a)^*=\varphi_{\ov a}$,
     we obtain that $f\in C(M(\H))_{\rm\ssc sym}$.
     \end{proof}

  Let $A$ be  a commutative unital (real or complex) Banach
algebra with unit element denoted by 1.  The set of invertible $n$-tuples in $A$ is the set
$$U_n(A)=\{(f_1,\dots,f_n)\in A^n\; \bigl| \; \exists  g=(g_1,\dots, g_n)\in A^n:
 \sum_{j=1}^n f_jg_j=1\}.$$

An element  $(f_1,\dots, f_n, g)\in U_{n+1}(A)$ is said to be {\it reducible},  if
there exists $(x_1,\dots, x_n)\in A^n$ so that  
$$(f_1+x_1g,\dots, f_n+x_ng)\in U_n(A).$$

The smallest integer $n$ for which every element  in $U_{n+1}(A)$ is  reducible
is called the {\it Bass stable rank} of $A$ and is denoted by ${\rm bsr}(A)$.
If no such  integer exists, then  ${\rm bsr}(A)=\infty$.

A related concept is that of the {\it topological stable rank}, ${\rm tsr}(A)$, of $A$ (see \cite{r}). 
This is the smallest
integer $n$ such that  $U_n(A)$ is dense in $A^n$.
If no such $n$ exists, then ${\rm tsr}(A)=\infty$.  It is well known that 
${\rm bsr}(A)\leq {\rm tsr}(A)$ (see \cite{r,mowi}).

Many papers have dealt with the determination of the Bass and/or  topological stable rank for
concrete function algebras 
(see for instance  \cite{cl, cs1, cs2, jmw, mw, ru, ru2, ru1, rs1, rs2,  su1, tr}).
It has been shown by Vasershtein \cite{va} and Rieffel \cite{r} that whenever $X$ is a compact 
Hausdorff space, then

$$\mbox{${\rm tsr}(C(X,\C))={\rm bsr}(C(X,\C))= \left[\frac{{\rm dim}\; X}{2} \right]+1$}$$
and
$${\rm tsr}(C(X,\R))={\rm bsr}(C(X,\R))={\rm dim} \;X +1.$$

 The following result can now be deduced from Theorem \ref{cover}  and   Su\'arez's 
result  \cite{su} that  the covering dimension of $M(\H)$ is  2.

\begin{corollary}\label{restricted}\hfill

\begin{enumerate}
\item ${\rm  tsr}\, C(M(\H))={\rm  bsr}\, C(M(\H))=2$;
 \item ${\rm  tsr}\, C(E,\R)={\rm  bsr}\, C(E,\R)= 2$;
  \item  ${\rm  tsr}\, C(M^+,\C)={\rm  bsr}\, C(M^+,\C)=2$.
 \end {enumerate}
\end{corollary}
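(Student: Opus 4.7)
The plan is to verify each of the three items as a direct corollary of the Vasershtein--Rieffel formulas displayed just above Corollary \ref{restricted}, once the relevant covering dimensions are in hand. All three spaces $M(\H)$, $E$, and $M^+$ are compact Hausdorff (being closed subsets of the compact Hausdorff space $M(\H)$), so the formulas
$$ {\rm tsr}\,C(X,\C)={\rm bsr}\,C(X,\C)=\left[\frac{\dim X}{2}\right]+1,\qquad {\rm tsr}\,C(X,\R)={\rm bsr}\,C(X,\R)=\dim X+1$$
apply in each case. Thus the argument reduces to inserting the correct dimension values.

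For (1), I would invoke Su\'arez's theorem \cite{su}, which gives $\dim M(\H)=2$. The complex formula then yields $[\,2/2\,]+1=2$, proving (1). For (2), I would use part (a) of Theorem \ref{cover}, according to which $\dim E=1$; the real formula gives $\dim E+1=2$, proving (2). For (3), part (b) of Theorem \ref{cover} supplies $\dim M^+=2$, and the complex formula again gives $[\,2/2\,]+1=2$, proving (3).

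No substantial obstacle remains at this stage: the heavy lifting has already been done in establishing $\dim E=1$ (via the separation criterion of Proposition \ref{dimension} together with $\H$-convexity of closed subsets of $E$) and in passing $\dim M^+=2$ from Su\'arez's theorem through the sum property of covering dimension. The only minor point worth a line in the write-up is that $E$, $M^+$ and $M(\H)$ are normal (they are compact Hausdorff), so the Vasershtein--Rieffel theorems are literally applicable without any additional hypothesis check.
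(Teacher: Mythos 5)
Your proposal is correct and is exactly the deduction the paper intends: the corollary is obtained by plugging $\dim M(\H)=2$ (Su\'arez), $\dim E=1$ and $\dim M^+=2$ (Theorem \ref{cover}) into the Vasershtein--Rieffel formulas for $C(X,\C)$ and $C(X,\R)$. No difference in approach; your remark about compact Hausdorff spaces being normal is a harmless extra check.
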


For an $n$-tuple $\mathbf f= (f_1,\dots,f_n)$ of complex-valued fucntions, let
$$|\mathbf f|=\Bigl(\sum_{j=1}^n |f_j|^2\Bigr)^{1/2}.$$ 
As usual,   $S^n$ denotes
the unit sphere 
$$\{(x_1,\dots, x_{n+1})\in \R^{n+1}: \sum_{j=1}^{n+1}x_j^2=1\}$$
in $\R^{n+1}$.   Finally, if $(z_1,z_2)\in \C^2$ with $|z_1|^2+|z_2|^2=1$,
then we say that $(z_1,z_2)\in S^3$.

We are now able to prove the main result of this paper.  For matter of comparison, recall that
${\rm  bsr}(\H)= 1$ (\cite{tr}),
${\rm  tsr}(\H)= 2$ (\cite{su1}) and
${\rm  bsr}(\H_\R)={\rm  tsr}(\H_\R)=2$ (\cite{mw}).

\begin{theorem}\label{stable}

  ${\rm  tsr}\, C(M(\H))_{\rm \ssc sym}={\rm  bsr}\, C(M(\H))_{\rm \ssc sym}=2$.

\end{theorem}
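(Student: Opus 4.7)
The plan is to prove ${\rm tsr}(A)\leq 2$ and ${\rm bsr}(A)\geq 2$ separately, where $A:=C(M(\H))_{\rm\ssc sym}$; combined with the general inequality ${\rm bsr}(A)\leq {\rm tsr}(A)$ this yields both equalities. For the lower bound, the restriction $R\colon A\to C(E,\R)$, $f\mapsto f|_E$, is well defined because functions in $A$ are real on $F_\tau=E$ by Corollary \ref{fixpoints}. It is surjective: given $\rho\in C(E,\R)$, Tietze's theorem produces a real-valued continuous extension to $M^+$, which Lemma \ref{reflection} extends uniquely to an element of $A$. Since ${\rm bsr}$ cannot increase under a surjective algebra homomorphism, ${\rm bsr}(A)\geq {\rm bsr}(C(E,\R))=2$ by Corollary \ref{restricted}(2).

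For the upper bound I fix $(f_1,f_2)\in A^2$ and $\varepsilon>0$ and adjust in two stages. First I correct values on $E$: applying ${\rm tsr}(C(E,\R))=2$ to $(f_1|_E,f_2|_E)$ yields $(h_1,h_2)\in U_2(C(E,\R))$ with $\|h_j-f_j|_E\|_\infty<\varepsilon/3$, and I set $2\delta:=\inf_E\sqrt{h_1^2+h_2^2}>0$. Each real-valued correction $h_j-f_j|_E$ is extended by Tietze (with sup norm preserved) to a real continuous function $\xi_j^+$ on $M^+$ and then to a $\tau$-invariant real function $\xi_j$ on $M(\H)=M^+\cup M^-$ (Theorem \ref{inters}) by $\xi_j(m):=\xi_j^+(\tau(m))$ on $M^-$; both definitions agree on $E$ because $\tau|_E=\mathrm{id}$. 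Since $\xi_j$ is real-valued and $\tau$-invariant, $\xi_j\in A$; the adjusted pair $\tilde f_j:=f_j+\xi_j\in A$ satisfies $\tilde f_j|_E=h_j$ and $\|\tilde f_j-f_j\|_\infty\leq\varepsilon/3$. By continuity $|\tilde f_1|^2+|\tilde f_2|^2\geq\delta^2$ on a $\tau$-invariant open neighborhood $U$ of $E$.

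Next I work inside $M^+$. Using ${\rm tsr}(C(M^+,\C))=2$ from Corollary \ref{restricted}(3), choose $(G_1,G_2)\in U_2(C(M^+,\C))$ with $\|G_j-\tilde f_j|_{M^+}\|_\infty<\eta$ for a small $\eta>0$ fixed below. Multiplying a Tietze extension of the discrepancy $G_j|_E-\tilde f_j|_E$ by a Urysohn cutoff that equals $1$ on $E$ and is supported in $U\cap M^+$ produces $\zeta_j\in C(M^+,\C)$ with $\|\zeta_j\|_\infty\leq\eta$, $\zeta_j|_E=G_j|_E-\tilde f_j|_E$, and $\zeta_j\equiv 0$ off $U\cap M^+$. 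The modified pair $G_j':=G_j-\zeta_j$ is then real on $]-1,1[\subset E$, and on $U\cap M^+$
\[
\sqrt{|G_1'|^2+|G_2'|^2}\ \geq\ \sqrt{|\tilde f_1|^2+|\tilde f_2|^2}-2\sqrt 2\,\eta\ \geq\ \delta-2\sqrt 2\,\eta\ >\ 0
\]
for $\eta<\delta/(2\sqrt 2)$, while off $U$ one has $(G_1',G_2')=(G_1,G_2)\in U_2$. Taking also $\eta<\varepsilon/6$, Lemma \ref{reflection} extends $(G_1',G_2')$ to $(g_1,g_2)\in A^2$ with $\|g_j-f_j\|_\infty<\varepsilon$. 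Finally, since $A$ is closed under $h\mapsto h\circ\tau=\overline h$, the explicit Bézout identity
\[
\sum_{j=1}^{2}\frac{\overline{g_j}}{|g_1|^2+|g_2|^2}\,g_j=1
\]
has all coefficients in $A$, so $(g_1,g_2)\in U_2(A)$. The principal obstacle is this last step: the correction $\zeta_j$ must simultaneously pin the $E$-values down to a real-valued target (so Lemma \ref{reflection} applies) and not destroy the invertibility of $(G_1,G_2)$ on $M^+$; confining the support of $\zeta_j$ to the neighborhood $U$ on which $(\tilde f_1,\tilde f_2)$ is already bounded below by $\delta$ is precisely what reconciles these two demands.
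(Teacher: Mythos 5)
Your proposal is correct, and it reaches the conclusion by a noticeably different route than the paper, although both arguments rest on the same structural ingredients: Corollary \ref{restricted} (stable rank $2$ for $C(E,\R)$ and $C(M^+,\C)$, coming from $\dim E=1$ and $\dim M^+=2$), the identities $M^+\cap M^-=E$ and $E=F_\tau$, and the reflection Lemma \ref{reflection}. For the lower bound the paper simply asserts that the invertible pair $(z,1-z^2)$ is not reducible, whereas you exhibit the restriction map onto $C(E,\R)$ as a surjective unital homomorphism (surjectivity via Tietze on $M^+$ plus Lemma \ref{reflection}) and invoke the monotonicity of the Bass stable rank under quotients together with ${\rm bsr}\,C(E,\R)=2$; this is more self-contained than the paper's one-line claim. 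For the upper bound the paper normalizes $\mathbf f/|\mathbf f|$ on the set where $|\mathbf f|\geq 1/n$, uses the \emph{Bass} stable ranks of $C(E,\R)$ and $C(M^+,\C)$ (via reducibility with Urysohn functions) to extend this sphere-valued map to $E$ and then to $M^+$, rescales by $|\mathbf f|+1/n$, and reflects; you instead use the \emph{topological} stable ranks directly, first correcting the pair on $E$, then approximating on $M^+$ and patching the $E$-values back with a cutoff supported in a neighborhood $U$ where the corrected pair is bounded below by $\delta$, before reflecting. The paper's sphere-valued extension scheme avoids your $\eta$--$\delta$ bookkeeping and the cutoff construction; your scheme avoids the normalization and makes the approximation estimates explicit. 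The only loose stitch in your write-up is the claim $\|\zeta_j\|_\infty\leq\eta$: a complex-valued Tietze extension of the discrepancy need not preserve the sup bound exactly, but composing with the radial retraction onto the disc of radius $\eta$ (or accepting a factor $\sqrt2$ in the constant, which changes nothing) repairs this; likewise $f_j\in C(M(\H))_{\rm\ssc sym}$ gives $f_j(m)=\overline{f_j(m^*)}$, which is what makes your estimate $\|g_j-f_j\|_\infty<\e$ valid on $M^-$ as well, as you implicitly use.
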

\begin{proof}

This follows as in \cite{moru1} by using that ${\rm  bsr}\, C(E,\R)=2$ and
 ${\rm  bsr}\, C(M^+,\C)=2$.
For the reader's convenience we present those parts that need replacing $\D$ by
$M(\H)$, and $\D^+$ by $M^+$. 

1.  We first note that ${\rm bsr}(C(M(\H))_{\rm\ssc sym})>1$, since
the invertible pair $(z,1-z^2)$ is not reducible.
\medskip
 
 2. Next we indicate how to prove that ${\rm tsr}(C(M(\H))_{\rm\ssc sym})\leq 2$.  
Let $\mathbf f=(f_1,f_2)\in (C(M(\H))_{\rm\ssc sym})^2$ and 
$$E_n=\{m\in M^+: |\mathbf f(m)|\geq 1/n\}.$$

{\bf Step 1} Suppose that $E_n\inter E\not=\emp$.
We claim that there is an $\R^2$-valued  extension of  the tuple
$\mathbf f/|\mathbf f| \in C(E_n\inter E, S^1)$
to $\mathbf {\tilde f_n}\in  C(E, S^1)$.
\medskip

To prove this, we choose  $g_n\in C(E,\R)$ with $g_n\equiv 0$ on 
$E_n\inter E$ and $g_n\equiv 1$ on $Z(f_1)\inter Z(f_2)\inter E$
(Urysohn's Lemma).

Then the triple $(f_1, f_2, g_n)$ is invertible
in $C(E,\R)$. Since by Corollary \ref{restricted} ${\rm bsr }(C(E,\R))=2$ , there  exist
 $h_{1,n}, h_{2,n}\in C(E,\R)$ such that 
 $$(f_{1}+ h_{1,n}g_n,  f_{2}+ h_{2,n}g_n)$$
 is invertible in $C(E,\R)$.  Now the pair
 $$\mathbf {\tilde f_n}:=(f_{1}+ h_{1,n}g_n,  f_{2}+ h_{2,n}g_n)/ 
| (f_{1}+ h_{1,n}g_n,  f_{2}+ h_{2,n}g_n)|$$
is the desired  extension. We point out that $\mathbf {\tilde f_n}$ is $\R^2$-valued.

{If $E_n\inter E=\emp$, then we let $\mathbf {\tilde f_n}=(1,0)$.}
\medskip

{\bf Step 2} Next we claim that there exists a $\C^2$-valued  extension of 
$\mathbf f/|\mathbf f| \in C(E_n, S^3)$
to $\mathbf {\hat f_n}\in  C( M^+, S^3)$ that coincides on $E$ with $\mathbf {\tilde f_n}$.
\medskip

In fact, define $\mathbf F_n= (F_{1,n}, F_{2,n})$ by

\begin{align}\mathbf F_n(m)=&\; \mathbf f(m)/|\mathbf f(m)| \text{~~whenever~~} m\in E_n,\\
\mathbf F_n(m)=&\; \mathbf{\tilde f_n}(m) \text{~~whenever~~} m\in E
\end{align}
and extended continuously to $M(\H)$ by Tietze. Note that $\mathbf F_n$ is well defined,
due to Step 1. Now let $G_n\in C(M^+,\R)$ be a real valued continuous function 
with $G_n\equiv 0$ on $E_n\union E$ and $G_n\equiv 1$ on $Z(F_{1,n})\inter Z(F_{2,n})$.
Then the triple $(F_{1,n}, F_{2,n}, G_n)$ is invertible in $C(M^+,\C)$.
Since  by Corollary \ref{restricted} ${\rm bsr}( C(M^+,\C))=2$, there exist 
$H_{1,n}, H_{2,n}\in C(M^+,\C)$ such that
$$(F_{1,n}+H_{1,n}G_n, F_{2,n}+H_{2,n}G_n)$$
is invertible in $C(M^+,\C)$. Now the pair
$$\mathbf {\hat f_n}= (F_{1,n}+H_{1,n}G_n, F_{2,n}+H_{2,n}G_n)/
 |(F_{1,n}+H_{1,n}G_n, F_{2,n}+H_{2,n}G_n)| $$
is the desired  extension.\medskip

{\bf Step 3} It is easy to check that 
  $|\mathbf f - (|\mathbf f| + 1/n)\, \mathbf {\hat f_n}|\leq 3/n$ on
$M^+$.\medskip

{\bf Step 4} In the steps above we have found a  $\C^2$-valued function 
$$\mathbf{g_n}:= (|\mathbf f| + 1/n)\, \mathbf {\hat f_n}$$
 with $|\mathbf f-\mathbf {g_n}|\leq 3/n$ on $M^+$.
Note that $\mathbf {g_n}$ is $\R^2$-valued on $E\supseteq ]-1,1[$. 
Thus by Lemma \ref{reflection}
we can use reflection to define
a $\C^2$-valued function $\mathbf\Phi_n$ on $M$ (whose components are in 
$C({M(\H)})_{\rm\ssc sym}$) so that $|\mathbf f-\mathbf\Phi_n|\leq 3/n$ on ${M(\H)}$
and such that $|\mathbf\Phi_n|\geq \frac{1}{n} >0$  on ${M(\H)}$.
  \end{proof}
  
  It remains an open problem  which pairs $(f,g)$ of functions in $C(M(\H))_{\rm\ssc sym}$
  are reducible. Recall that in $\H_\R$ an invertible pair $(f,g)$ is reducible
  if and only if $f$ has constant sign on the set $Z(g)\inter E$ (see \cite{wi2,wi3} and \cite{m}).
  The situation  in $C(M(\H))_{\rm\ssc sym}$ is more difficult, since a)  the behaviour 
  of $f$ outside $E$ is not determined by that in $E$ (in contrast to the analytic case)
  and b) the Bass stable rank of $C(M(\H))$ is two, and not one.
  So a characterization of the reducible elements in  $C(M(\H))_{\rm\ssc sym}$ 
  must also involve conditions  outside  $M(\H)\setminus E$.   A necessary condition
  for example is the following:
  
  Suppose that $(f,g)$ is reducible, say $u=f+hg\not=0$ on $M(\H)$ and
   let $C$ be a connected  component of  $M(\H)\setminus Z(g)$.
Suppose that  the closure of  $C$ is contained in $\D$. 
Then $u$ is a zero free (continuous) extension
of $f|_{\partial C}$ to $C$. Thus the Brouwer degree  of $f$ satisfies $d(f,C,0)=d(u,C,0)=0$.

Necessary and sufficient criteria for reducibility of individual pairs  in
$C(K)$ and $C(K)_{\rm\ssc sym}$, where $K\ss C$ is compact, 
have meanwhile  been developed (see  
  \cite{ru1} for preliminary material and \cite{moru3} for a full solution.

  \section{Conjecture}
  
  In view  of the results in this paper and the ones in \cite{moru2}, we conjecture that the following is true:

\begin{conjecture}
Let $X$ be a compact Hausdorff space, and $\tau$ a topological involution of $X$.
Denote the set of fixed points of $\tau$ by $E$.
Then 
$${\rm bsr}\; C(X,\tau)={\rm tsr}\; C(X,\tau)=\max\left\{\mbox{$\left[\frac{{\rm dim}\; X}{2}\right], {\rm dim}\; E$} \right\}+1.$$ 
\end{conjecture}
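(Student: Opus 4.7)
The plan is to establish the conjectured equality by combining an upper bound on the topological stable rank with a matching lower bound on the Bass stable rank; since ${\rm bsr}\le {\rm tsr}$ always holds, this suffices. Write $N=\max\{\left[\dim X/2\right],\dim E\}$. The intuition is that the formula should interpolate between the extreme cases $\tau=\mathrm{id}$ (where $E=X$, $C(X,\tau)=C(X,\R)$, and the answer is $\dim X+1$) and $\tau$ fixed-point-free (where $C(X,\tau)\cong C(X/\tau,\C)$ as real Banach algebras and the answer is $\left[\dim X/2\right]+1$, using that a finite-to-one closed surjection preserves covering dimension so that $\dim(X/\tau)=\dim X$).

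For the upper bound ${\rm tsr}\, C(X,\tau)\le N+1$, the strategy is to mimic the proof of Theorem \ref{stable}. The preliminary step is to produce a closed ``fundamental half'' $X^+\subseteq X$ with $X=X^+\cup\tau(X^+)$ and $X^+\cap\tau(X^+)=E$, playing the role of $M^+$ and relying on an analogue of Theorem \ref{inters}. Granted this, given $\mathbf{f}\in C(X,\tau)^n$ with $n=N+1$ the argument proceeds in three stages mirroring Steps 1--4 of Theorem \ref{stable}: first perturb $\mathbf{f}|_E$ to an invertible $n$-tuple in $C(E,\R)^n$ using ${\rm bsr}\, C(E,\R)=\dim E+1\le n$; next extend this perturbation to an invertible $n$-tuple on $X^+$ using ${\rm bsr}\, C(X^+,\C)\le \left[\dim X/2\right]+1\le n$; finally reflect across $\tau$ via the analogue of Lemma \ref{reflection} to recover an approximant in $C(X,\tau)^n$. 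The same scheme, applied to an $(n+1)$-tuple that is to be reduced to an invertible $n$-tuple, gives the parallel statement for ${\rm bsr}$.

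The hard part is the existence of the fundamental domain $X^+$. For a general involution it need not exist: if $X$ is connected and $E=\emptyset$, any such $X^+$ would be simultaneously closed and open, as the antipodal involution on $S^2$ already illustrates. When $E=\emptyset$ I would bypass the splitting by working directly on the orbit space via $C(X,\tau)\cong C(X/\tau,\C)$ and quoting the classical Vasershtein--Rieffel result. In the general situation, a workable substitute is a local version of the splitting: cover $X/\tau$ by finitely many sets over which the projection $X\to X/\tau$ is either a trivial double cover or a neighborhood of a branch point in $E$, construct the local invertible perturbations, and glue them with a $\tau$-equivariant partition of unity. Verifying that this gluing preserves invertibility is where the bulk of the technical work lies, but it should go through because the two stable ranks we combine lie well below the corresponding covering dimensions.

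For the lower bound, two separate estimates suffice. The restriction $\rho:C(X,\tau)\to C(E,\R)$, $f\mapsto f|_E$, is a surjective unital Banach-algebra morphism (given $h\in C(E,\R)$, extend by Tietze to $g\in C(X,\R)$ and symmetrize by $(g+g\circ\tau)/2$), and surjective morphisms do not decrease the Bass stable rank, so ${\rm bsr}\, C(X,\tau)\ge {\rm bsr}\, C(E,\R)=\dim E+1$. For the other half of the maximum, the complexification $C(X,\tau)\otimes_\R\C\cong C(X,\C)$ combined with the inequality ${\rm bsr}(A)\ge {\rm bsr}(A\otimes_\R\C)$ for real Banach algebras (in the spirit of the arguments in \cite{mowi}) gives ${\rm bsr}\, C(X,\tau)\ge \left[\dim X/2\right]+1$. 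Taking the maximum of these two lower bounds closes the estimate and proves the conjecture.
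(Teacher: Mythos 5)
You should first be aware that the paper itself contains no proof of this statement: it is posed as an open conjecture, and the ``Note added in proof'' records that its confirmation appears in the separate paper \cite{moru4}. So your proposal has to stand on its own, and as it stands the upper bound ${\rm tsr}\,C(X,\tau)\le \max\{[\dim X/2],\dim E\}+1$ --- which is the real content of the conjecture --- is not established. Your scheme transplants the proof of Theorem \ref{stable}, but that proof hinges on the very special structure of $M(\H)$: the decomposition $M(\H)=M^+\cup M^-$ with $M^+\cap M^-=E$ (Theorem \ref{inters}) and the reflection Lemma \ref{reflection}. You concede that a closed ``fundamental half'' $X^+$ with $X=X^+\cup\tau(X^+)$ and $X^+\cap\tau(X^+)=E$ need not exist; it already fails with nonempty fixed-point set, e.g.\ for the half-turn of $S^2$ about an axis (there $E$ is the two poles, and connectedness of $S^2\setminus E$ rules out such a splitting). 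Your substitute --- covering $X/\tau$ by finitely many pieces and gluing local invertible perturbations by a $\tau$-equivariant partition of unity --- is precisely the step that is not a proof: partition-of-unity combinations of invertible tuples are not invertible in general, and controlling this local-to-global passage is the whole difficulty of any stable-rank computation; ``it should go through'' is not an argument. Two auxiliary claims on this route are also false as stated: for fixed-point-free $\tau$ one has $C(X,\tau)\cong C(X/\tau,\C)$ only when the double cover $X\to X/\tau$ is trivial (for the antipodal involution on $S^1$ the algebra contains no square root of $-1$, since $j^2=-1$ forces $j\equiv\pm i$, which violates $j\circ\tau=\ov{j}$; hence it is not isomorphic to any $C(Y,\C)$), and a finite-to-one closed surjection does not in general ``preserve covering dimension'' --- Hurewicz's estimate only gives $\dim Y\le\dim X+(k-1)$ --- so the true equality $\dim(X/\tau)=\dim X$ needs the theorem on orbit spaces of finite group actions, not the principle you invoke.

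By contrast, your lower bounds are essentially sound. Restriction to $E$ maps $C(X,\tau)$ onto $C(E,\R)$ (Tietze plus symmetrization by $\frac12(g+g\circ\tau)$), and the Bass stable rank does not increase under surjective unital morphisms, giving ${\rm bsr}\,C(X,\tau)\ge\dim E+1$; and since $C(X,\C)$ is a free module of rank two over $C(X,\tau)$, Vasershtein's theorem on module-finite algebras over a commutative ring yields ${\rm bsr}\,C(X,\C)\le{\rm bsr}\,C(X,\tau)$, i.e.\ the $\bigl[\frac{\dim X}{2}\bigr]+1$ bound (this is a cleaner justification than the vague appeal to \cite{mowi}). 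Also note that once ${\rm tsr}\le N+1$ is proved, the bsr upper bound is automatic from ${\rm bsr}\le{\rm tsr}$, so your separate reduction argument is unnecessary. The verdict: the lower-bound half is fine, but the upper bound requires genuinely new ideas beyond a reflection argument, which is why the conjecture needed the separate paper \cite{moru4}.
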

\bigskip

{\bf Note added in proof}

Meanwhile this conjecture has been confirmed (see \cite{moru4}). \bigskip

{\bf Acknowledgements}
I thank Rudolf Rupp for his contributions to  joint work preceding  this paper,
without those Theorem \ref{stable} would not have come to live.
I also thank Amol Sasane and Brett Wick for several discussions concerning contractability
or non-contractability of the spectrum of $\H$.

\end{document}